\def\lplus{\,\rotatebox[]{-90}{$\pm$}\,}
\def\lminus{\vdash}
\def\A{\left< A\right>}
\def\B{\left< B\right>}
\def\C{\left< C\right>}
\def\PR{Pr}
\def\tr{tr}
\DeclareMathOperator{\argmin}{argmin}
\DeclareMathOperator{\id}{id}
\def\cal{\mathcal}
\def\ra{\rightarrow}
\def\a{\alpha}
\def\b{\beta}
\def\0{{\bf 0}}
\newcommand{\R}{{\mathbb R}}
\newcommand{\Q}{{\mathbb Q}}
\newcommand{\N}{{\mathbb N}}
\def\dsum{\mathop{\sum}\limits}
\newtheorem{thm}{Theorem}[section]
\newtheorem{dfn}[thm]{Definition}
\newtheorem{prp}[thm]{Proposition}
\newtheorem{exa}[thm]{Example}
\newtheorem{lem}[thm]{Lemma}
\newtheorem{cor}[thm]{Corollary}
\newtheorem{rem}[thm]{Remark}
\newtheorem{proof}[thm]{Proof}
\begin{document}

\title{Topologies on Quotient Space of Matrices via Semi-tensor Product}

\author{Daizhan Cheng, Zequn Liu
	\thanks{This work is supported partly by the National Natural Science Foundation of China (NSFC) under Grants 61733018, 61333001, and 61773371.}
	\thanks{Daizhan Cheng is with the Key Laboratory of Systems and Control, Academy of Mathematics and Systems Sciences, Chinese Academy of Sciences,
		Beijing 100190, P. R. China (e-mail: dcheng@iss.ac.cn).}
    \thanks{Zequn Liu  is with the Key Laboratory of Systems and Control, Academy of Mathematics and Systems Sciences, Chinese Academy of Sciences,
		Beijing 100190, P. R. China and School of Mathematical Sciences, University of Chinese Academy of Sciences, Beijing 100049, P. R. China (e-mail: liuzequn@amss.ac.cn).}
    \thanks{Submitted to Asian Journal of Control.}
}


\maketitle

\begin{abstract}
An equivalence of matrices via semi-tensor product (STP) is proposed. Using this equivalence, the quotient space is obtained. Parallel and sequential arrangements of the natural projection on different shapes of matrices lead to the product topology and quotient topology respectively. Then the Frobenious inner product of matrices is extended to equivalence classes, which produces a metric on the quotient space. This metric leads to  a metric topology. A comparison for these three topologies is presented. Some topological properties are revealed.
\end{abstract}

\begin{IEEEkeywords}
Semi-tensor product, equivalence class, quotient space, product topology, quotient topology, metric topology.
\end{IEEEkeywords}

\IEEEpeerreviewmaketitle

\section{Preliminaries}

Semi-tensor product of matrices is  defined as follows \cite{che11}:

\begin{dfn}\label{d1.1} Let $A\in {\cal M}_{m\times n}$, $B\in {\cal M}_{p\times q}$, and $t=n\vee p$ be the least common multiple of $n$ and $p$. Then the semi-tensor product (STP) of $A$ and $B$, denoted by $A\ltimes B$, is defined as
\begin{align}\label{1.1}
A\ltimes B:=\left(A\otimes I_{t/n}\right)\left(B\otimes I_{t/p}\right).
\end{align}
\end{dfn}

As $n=p$ it becomes the conventional matrix product. Hence it is a generalization of conventional matrix product. It has some excellent properties such as
\begin{enumerate}
\item it is applicable to two arbitrary matrices;
\item it has certain commutative properties, called the pseudo-commutativity. Particularly, when the swap matrix is used, more commutative properties are obtained;
\item it can be used to express multi-linear functions/mappings easily;
\item it keeps most properties of conventional matrix product available, etc.
\end{enumerate}

Because of its nice properties, STP has received a  multitude of applications, including (i) logical dynamic systems \cite{che11}, \cite{for13}, \cite{las13}; (ii) systems biology \cite{zha13}, \cite{gao13}; (iii) graph theory and formation control \cite{wan12}, \cite{zha13d}; (iv) circuit design and failure detection \cite{che13}, \cite{li12}, \cite{li12h};
 (v) finite
automata and symbolic dynamics \cite{hoc13}, \cite{xu13}, \cite{zha15}; (vi) coding and cryptography
\cite{zho15}, \cite{zha14d}; (vii) fuzzy control \cite{che12d}, \cite{fen13}; (viii) some engineering applications \cite{liu13c}, \cite{wu15}; and many other topics \cite{che12}, \cite{yan15}, \cite{zho16}, \cite{zou15}; just to name a few.

A recent important development of STP is its application to game theory. Particularly, it has been shown as a powerful tool for modeling, analysis, and control design of finite evolutionary games \cite{che14}, \cite{che15}, \cite{guo13}, \cite{liu16}.

After near 20 years development of STP, now it is time for us to explore its mathematical insides. In this paper, we first reveal an interesting fact that the STP is essentially a product of two classis of matrices. Stimulated by this fact, we formally define an equivalence and then consider the quotient space of matrices under this equivalence. Particularly, we are interested in the topological structure of the quotient space. As the natural projection from matrices to their equivalence classes for different sizes of matrices is arranged in a parallel way or a sequential way, the product topology and quotient topology are obtained respectively.

Then the Frobenious inner product of matrices is modified to an inner product of equivalence classes. Using this inner product, a metric is obtained for quotient space. The  metric topology is also obtained. These three topologies, namely, product topology, quotient topology, and metric topology, are compared, and their relationship is investigated. Finally, some topological properties of the quotient space are presented.

The rest of this paper is organized as follows: Section 2 reviews the equivalence relation of matrices. Using this equivalence, the quotient space is obtained in Section 3. Moreover, two natural topologies, namely, product topology and quotient topology, are also proposed. In Section 4, vector space structure and inner product are derived for quotient space, which make the quotient space an inner product space. In Section 5 a norm and then a  metric are deduced from the inner product. The metric topology is followed. Comparison among these three topologies has also been presented.  Section 6 considers subspaces of the quotient space, orthogonal projection on subspaces is also discussed. Section 7 is a brief conclusion with a conjecture, which is left for further study.

\section{Equivalence of Matrices}

 Observing the STP of matrices carefully, one sees easily that in fact it is a product of two classes: $\A=\{A,A\otimes I_2, A\otimes I_3,\cdots\}$ with $\B=\{B,B\otimes I_2, B\otimes I_3,\cdots\}$. Motivated by this fact, we first define an equivalence on the set of all matrices
$$
{\cal M}:=\bigcup_{m=1}^{\infty}\bigcup_{n=1}^{\infty}{\cal M}_{m\times n}.
$$

\begin{dfn}\label{d2.1} \cite{chepr}
Let $A,~B\in {\cal M}$. $A$ and $B$ are said to be equivalent, denoted by $A\sim B$, if there exist identity matrices $I_{\a}$ and $I_{\b}$ such that
\begin{align}\label{2.1}
A\otimes I_{\a}=B\otimes I_{\b}\quad (:=\Theta).
\end{align}
\end{dfn}
The equivalence class of $A$ is denoted by
\begin{align}\label{2.2}
\A=\{B\;|\;B\sim A\}.
\end{align}

It is necessary to verify the relation defined by (\ref{2.1}) is an equivalence relation, i.e., it is reflexive, symmetric, and transitive \cite{kel75}. The verification is straightforward.

\begin{prp}\label{p2.2}  \cite{chepr} If $A\sim B$, then there exists a $\Lambda$ such that
\begin{align}\label{2.3}
A=\Lambda \otimes I_{\b},\quad B=\Lambda\otimes I_{\a}.
\end{align}
\end{prp}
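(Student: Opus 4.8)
The plan is to produce $\Lambda$ as a kind of ``greatest common tensor divisor'' of $A$ and $B$, read off from the block structure of the common matrix $\Theta$, using an elementary cancellation principle together with a coprimality trick. By Definition~\ref{d2.1} fix $\a,\b$ with $\Theta:=A\ot I_{\a}=B\ot I_{\b}$. First I would record a cancellation fact: if $M\ot I_{g}=N\ot I_{g}$ with $M,N$ of the same size, then $M=N$; indeed, partitioning both sides into $g\times g$ blocks exhibits each block as a scalar multiple of $I_{g}$, so the scalars must agree. Applying this to $A\ot I_{\a}=(A\ot I_{\a/g})\ot I_{g}$ and $B\ot I_{\b}=(B\ot I_{\b/g})\ot I_{g}$ with $g=\gcd(\a,\b)$ gives $A\ot I_{\a/g}=B\ot I_{\b/g}$, so from now on I may assume $\gcd(\a,\b)=1$. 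Comparing the sizes of $A\ot I_{\a}$ and $B\ot I_{\b}$ and using coprimality then forces $\b\mid m,n$ (with $A\in{\cal M}_{m\times n}$) and $\a\mid p,q$ (with $B\in{\cal M}_{p\times q}$), so $\Theta\in{\cal M}_{\a\b s\times\a\b t}$, $A\in{\cal M}_{\b s\times\b t}$ and $B\in{\cal M}_{\a s\times\a t}$ for some $s,t\in\N$.

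\textbf{Core step: $\Theta=\Lambda\ot I_{\a\b}$.} I would partition $\Theta$ into $\a\b\times\a\b$ blocks and show that each one is a scalar multiple of $I_{\a\b}$. On one hand, $\Theta=A\ot I_{\a}$ shows that the $(r,c)$-entry of $\Theta$ vanishes unless $r\equiv c\pmod{\a}$, while $\Theta=B\ot I_{\b}$ shows it vanishes unless $r\equiv c\pmod{\b}$; since $\gcd(\a,\b)=1$, any nonzero entry satisfies $r\equiv c\pmod{\a\b}$, i.e.\ every $\a\b$-block is a diagonal matrix. On the other hand, reading a diagonal position $K\in\{1,\dots,\a\b\}$ of a fixed block through $\Theta=A\ot I_{\a}$ shows its value is an entry $a_{ij}$ with $i,j$ depending on $K$ only through $\lceil K/\a\rceil$, hence constant along each run of $\a$ consecutive values of $K$; reading the same position through $\Theta=B\ot I_{\b}$ shows it is constant along each run of $\b$ consecutive values. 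But a function on $\{1,\dots,\a\b\}$ that is constant on length-$\a$ runs and on length-$\b$ runs can change between $K$ and $K+1$ only when $\a\mid K$ and $\b\mid K$, i.e.\ only when $\a\b\mid K$, which never occurs for $1\le K<\a\b$; hence it is constant on the whole block. Letting $\l_{IJ}$ be the common diagonal value of the $(I,J)$-block defines $\Lambda\in{\cal M}_{s\times t}$ with $\Theta=\Lambda\ot I_{\a\b}$.

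\textbf{Conclusion.} Finally $A\ot I_{\a}=\Theta=\Lambda\ot I_{\a\b}=(\Lambda\ot I_{\b})\ot I_{\a}$, and $A$ and $\Lambda\ot I_{\b}$ both lie in ${\cal M}_{\b s\times\b t}$, so the cancellation fact yields $A=\Lambda\ot I_{\b}$; the symmetric argument gives $B=\Lambda\ot I_{\a}$. The step I expect to be the main obstacle is the second half of the core step: the diagonal structure by itself does not make a block scalar, and one must notice that coprimality of $\a$ and $\b$ is precisely what promotes ``constant on short runs, in two incommensurable ways'' to ``globally constant''; the conclusion genuinely fails when $\gcd(\a,\b)>1$.
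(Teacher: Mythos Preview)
The paper does not actually supply a proof of Proposition~\ref{p2.2}; it merely cites \cite{chepr} and then immediately uses the result to introduce the notions of greatest common divisor and least common multiple of two equivalent matrices. So there is nothing in the present paper to compare your argument against.

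On its own merits your proof is correct. The reduction to $\gcd(\a,\b)=1$ via the cancellation fact $M\ot I_g=N\ot I_g\Rightarrow M=N$ is the right first move, and it is necessary: as you observe at the end, the conclusion $A=\Lambda\ot I_{\b}$, $B=\Lambda\ot I_{\a}$ with the \emph{original} $\a,\b$ from (\ref{2.1}) is false in general (take $A=B$ irreducible and $\a=\b=2$). The paper's statement is thus to be read either with $\a,\b$ already assumed coprime, or with $\a,\b$ replaced by $\a/g,\b/g$; your argument makes this explicit. The size count using coprimality, giving $A\in{\cal M}_{\b s\times\b t}$, $B\in{\cal M}_{\a s\times\a t}$, is fine.

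The core step is also sound. The ``diagonality'' of each $\a\b\times\a\b$ block follows exactly as you say, by combining the two congruence constraints modulo $\a$ and modulo $\b$ with the Chinese remainder theorem. The scalar part is the more delicate half, and your run argument is clean: reading $\Theta=A\ot I_{\a}$ at a diagonal position $K$ of a fixed block gives an entry of $A$ indexed via $\lceil K/\a\rceil$, hence constant on each interval $\{(j-1)\a+1,\dots,j\a\}$, and symmetrically for $\b$. A function on $\{1,\dots,\a\b\}$ constant on both families of consecutive blocks can only jump at $K$ with $\a\mid K$ and $\b\mid K$, i.e.\ $\a\b\mid K$, which does not occur for $K<\a\b$; hence the block is $\l_{IJ}I_{\a\b}$. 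The final cancellation step is routine.

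In short: your proof is complete and self-contained, and fills in what the paper omits by citation.
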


If there exists an identity matrix $I_k$ such that $A\otimes I_k=B$, then $A$ is said to be a divisor of $B$, and $B$ is a multiple of $A$.

If $\a\wedge\b=1$, that is, $\a$ and $\b$ are co-prime, then the $\Theta$ in (\ref{2.1}) is called the least common multiple of $A$ and $B$, and the $\Lambda$ in (\ref{2.3}) is called the greatest common divisor. $A$ is said to be irreducible, if the only divisor of $A$ is itself. Consider an equivalence class $\A$. Then it is easy to verify that
$$
\A=\{A_1,A_2,A_3,\cdots\},
$$
where $A_i=A_1\otimes I_i$, $i=1,2,3\cdots$, and $A_1$ is irreducible, called the root element of $\A$.

Define a class of matrices as
$$
{\cal M}_{\mu}:=\left\{A\in {\cal M}_{m\times n}\;|\; m/n=\mu\right \}.
$$
Then we have the following partition:
\begin{align}\label{2.4}
{\cal M}=\bigcup_{\mu\in \Q_+}{\cal M}_{\mu},
\end{align}
where $\Q_+$ is the set of positive rational numbers.

Furthermore, set $\mu=\mu_y/\mu_x$ and assume the $\mu_y\wedge \mu_x=1$, where $\wedge$ stands for greatest common devisor.
Define
\begin{align}\label{2.5}
{\cal M}_{\mu}^k:={\cal M}_{k\mu_y\times k\mu_x}.
\end{align}
Then we can further decompose ${\cal M}$ as
\begin{align}\label{2.6}
{\cal M}=\bigcup_{\mu\in \Q_+}\bigcup_{k=1}^{\infty}{\cal M}^k_{\mu}.
\end{align}

We refer to \cite{kel75} for related topological concepts and notations used hereafter.

\begin{dfn}\label{d2.3} The topology ${\cal T}_{{\cal M}}$ on ${\cal M}$ is defined as follows:
\begin{itemize}
\item
Each component ${\cal M}^k_{\mu}$, $\mu\in \Q_+$ and $k\in \N$, is a clopen set.
\item
Within a component ${\cal M}^k_{\mu}$, the natural Euclidean topology of $\R^{k^2\mu_y\mu_x}$ is adopted.
\end{itemize}
\end{dfn}

\begin{rem}\label{r2.4}
\begin{enumerate}
\item
The topological space $\left({\cal M},~{\cal T}_{{\cal M}}\right)$ is a very classical and natural topology, which is commonly used.
\item If $A\in {\cal M}_{\mu}$ and $B\in {\cal M}_{\sigma}$ and $\mu\neq \sigma$, then $A\not\sim B$. Hence, there is no equivalence relation between elements in different ${\cal M}_{\mu}$ components.
\end{enumerate}
\end{rem}

\section{Quotient Space and Its Topologies}

Using the equivalence relation defined in previous section, we can define quotient space as follows:
\begin{align}\label{3.1}
\Sigma:={\cal M}/\sim.
\end{align}
Similarly, we can also define the component quotient space as follows:
\begin{align}\label{3.2}
\Sigma_{\mu}:={\cal M}_{\mu}/\sim,\quad \mu\in \Q_+.
\end{align}

The main purpose of this paper is to build a topology on the quotient space $\Sigma$.
We have
\begin{align}\label{3.3}
\Sigma=\bigcup_{\mu\in \Q_+}\Sigma_{\mu}.
\end{align}
Since there is no equivalence relation cross different components of $\Sigma_{\mu}$'s, it is natural to assume each component $\Sigma_{\mu}$ being clopen. Then we have only to build  topology on each component.

Define a naturel projection $\PR:{\cal M}\ra \Sigma$ as
\begin{align}\label{3.4}
\PR(A):=\A,\quad A\in {\cal M}.
\end{align}
When restrict $\PR$ on each components, we have $\PR:{\cal M}_{\mu}\ra \Sigma_{\mu}$. As aforementioned, we will focus on building a topology on each $\Sigma_{\mu}$.

\subsection{Product Topology}

Consider ${\cal M}_{\mu}$ as a product space:
$$
{\cal M}_{\mu}=\prod_{k=1}^{\infty}{\cal M}_{\mu}^k.
$$
Then the natural mapping $\PR:{\cal M}_{\mu}\ra \Sigma_{\mu}$ can be split into parallel mappings on each component as $\PR\big|_{{\cal M}_{\mu}^k}: {\cal M}_{\mu}^k\ra \Sigma_{\mu}$, $k=1,2,\cdots$. This mapping is described in Fig.1.

\begin{center}
\setlength{\unitlength}{0.6cm}
\begin{picture}(11,6)\thicklines
\put(3,0){\framebox(4,1){$\left(\Sigma_{\mu},~{\cal T}_P\right)$}}
\put(0,3){\framebox(2,1){${\cal M}^1_{\mu}$}}
\put(3,3){\framebox(2,1){${\cal M}^2_{\mu}$}}
\put(7,3){\framebox(2,1){${\cal M}^k_{\mu}$}}
\put(1,3){\vector(1,-1){2}}
\put(4,3){\vector(0,-1){2}}
\put(8,3){\vector(-1,-2){1}}
\put(2.3,3.5){$\times$}
\put(5,3.5){$\times \cdots \times $}
\put(9.5,3.5){$\times \cdots$}
\put(1.6,1.5){Pr}
\put(4.2,1.5){Pr}
\put(7.7,1.5){Pr}
\end{picture}\label{Fig.1}
\end{center}
\vskip 5mm
\centerline{Fig.1: Parallel Arrangement of Projection}
\vskip 5mm

To make $\PR\big|_{{\cal M}_{\mu}^k}$ continuous, it is obvious that $\left<O\right>$ is open, if and only if, $O\subset {\cal M}_{\mu}^k$ is open.
Finally, we take the product topology for $\Sigma_{\mu}$.

\begin{dfn}\label{d3.1.1} \cite{kel75} Let $(X_{\lambda},{\cal T}_{\lambda})$, $\lambda\in \Lambda$ be a set of topological spaces. The product set
$$
X:=\prod_{\lambda\in \Lambda}X_{\lambda}
$$
 with product topology, is called a product topological space, where the product topology is generated by
$$
S:=\{ O \;|\; ~\mbox{there is a}~~\lambda, ~~\mbox{such that}~~O\in {\cal T}_{\lambda}\}
$$
as the topological subbase.
\end{dfn}

Now consider $\Sigma_{\mu}$. Its product topology is generated as follows:
\begin{itemize}
\item Step 1: Topological subbase $S$:
$$
S_k:=\left\{\left<O_k\right>\;|\;O_k\in {\cal T}_{{\cal M}_{\mu}^k}\right\},
$$
where and hereafter, we use the notation
$$
\left<O\right>:=\{\A\;|\;A\in O\}.
$$

$$
S:=\bigcup_{k=1}^{\infty}S_k.
$$
\item Step 2: Topological base $B$:
$$
B:=\left\{\bigcap_{i=1}^t O_i\;|\; O_i\in S, t<\infty\right\}.
$$
(That is, $B$ is the set of finite intersections of elements in $S$.)

\item Step 3: Product topology on $\Sigma_{\mu}$, denoted by ${\cal T}_P$:
$$
{\cal T}_P=\left\{\bigcup_{\lambda\in \Lambda}O_{\lambda}\;|\;O_{\lambda}\in B\right\}.
$$
(That is,  ${\cal T}_P$ is the set of arbitrary union of elements in $B$.)
\end{itemize}

\begin{thm}\label{t3.1.2} $\left(\Sigma_{\mu}, {\cal T}_P\right)$ is a second countable Hausdorff space. \footnote{(1) A topological space is second countable if it has a countable topological basis. (2) A topological space is Hausdorff if any two points have their open neighborhoods, which are separated.}
\end{thm}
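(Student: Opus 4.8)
The plan is to establish the two properties separately, both by working directly with the explicit description of ${\cal T}_P$ in terms of the subbase $S=\bigcup_k S_k$.

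\textbf{Second countability.} First I would observe that each component ${\cal M}_\mu^k$ is homeomorphic to a Euclidean space $\R^{k^2\mu_y\mu_x}$ and hence is itself second countable; fix a countable base ${\cal B}_k$ of ${\cal M}_\mu^k$. I claim that $\{\langle O\rangle \mid O\in {\cal B}_k,\ k\in\N\}$ generates the same topology as $S$, because every subbasic set $\langle O_k\rangle$ with $O_k\in {\cal T}_{{\cal M}_\mu^k}$ is a union of the $\langle O\rangle$ with $O\in {\cal B}_k$, $O\subseteq O_k$ — here I use that $O\mapsto\langle O\rangle$ commutes with unions, which follows from the definition $\langle O\rangle=\{\langle A\rangle\mid A\in O\}$. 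So $S$ may be replaced by a countable subbase. The set $B$ of finite intersections of a countable family is again countable, and ${\cal T}_P$ is by definition the set of arbitrary unions of elements of $B$; since a countable collection has only countably many members available as a base, $B$ (or rather the countable subfamily just described) is a countable base for ${\cal T}_P$. This gives second countability.

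\textbf{Hausdorff.} Given two distinct points $\langle A\rangle\neq\langle B\rangle$ in $\Sigma_\mu$, let $A_1$, $B_1$ be their root elements (the irreducible representatives guaranteed by the discussion after Proposition~\ref{p2.2}); since the classes differ, $A_1\neq B_1$, and they lie in some components ${\cal M}_\mu^{k}$ and ${\cal M}_\mu^{\ell}$ respectively. If $k=\ell$, then $A_1$ and $B_1$ are two distinct points of the Hausdorff space ${\cal M}_\mu^k$, so they have disjoint open neighborhoods $U\ni A_1$, $V\ni B_1$ in ${\cal M}_\mu^k$; the key point is that $\langle U\rangle$ and $\langle V\rangle$ are disjoint in $\Sigma_\mu$, which I would prove by noting that within a single component the root element of a class is unique, so $\langle U\rangle\cap\langle V\rangle\neq\emptyset$ would force a common root in $U\cap V$. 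If $k\neq\ell$, I would instead separate using whole components: one has to exhibit, using the subbase, open sets of $\Sigma_\mu$ that on the component ${\cal M}_\mu^k$ look like a neighborhood of $A_1$ and on ${\cal M}_\mu^\ell$ look like a neighborhood of $B_1$, arranged so that their preimages under the various $\PR|_{{\cal M}_\mu^j}$ are disjoint; concretely, take $\langle U\rangle$ with $U\ni A_1$ open in ${\cal M}_\mu^k$ and small enough to contain no multiple $B_1\otimes I_j$, and symmetrically $\langle V\rangle$.

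\textbf{Expected main obstacle.} The routine part is second countability; the delicate part is the Hausdorff separation, specifically verifying that the subbasic sets $\langle U\rangle$ really do separate classes rather than accidentally overlapping because a class meets several components. The crux is the uniqueness of the root element together with the fact (Remark~\ref{r2.4}) that no equivalence relation crosses $\mu$-components and, within a fixed $\mu$, the relation between components is generated by tensoring with identity matrices; one must rule out that a small neighborhood $U$ of an irreducible $A_1$ in ${\cal M}_\mu^k$ contains some matrix equivalent to a matrix in a neighborhood $V$ of $B_1$. This can be handled by shrinking $U$ and $V$ so that $U$ contains no matrix of the form $C\otimes I_{k/j}$ with $C$ sitting in the $V$-neighborhood in ${\cal M}_\mu^j$ for the finitely many relevant divisors $j$ of $k$ and multiples — using continuity of the Kronecker-product maps $C\mapsto C\otimes I_r$ — so that the pullbacks of $\langle U\rangle$ and $\langle V\rangle$ are genuinely disjoint.
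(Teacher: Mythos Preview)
Your second-countability argument is essentially the paper's: pick a countable base of each Euclidean piece ${\cal M}_\mu^k$, push forward by $\langle\cdot\rangle$, and take finite intersections.

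For Hausdorff you take a different and unnecessarily complicated route. The paper avoids your case split entirely by lifting both root elements to a \emph{common} level: with $A_1\in{\cal M}_\mu^p$, $B_1\in{\cal M}_\mu^q$ and $t=p\vee q$, the matrices $A_1\otimes I_{t/p}$ and $B_1\otimes I_{t/q}$ are two distinct points of the single Euclidean space ${\cal M}_\mu^t$, so one separates them there by disjoint opens $O_A,O_B$. The key fact you already isolated in your $k=\ell$ case --- two equivalent matrices of the \emph{same} size must be equal --- then gives $\langle O_A\rangle\cap\langle O_B\rangle=\emptyset$ immediately, with no shrinking and no analysis of divisors or Kronecker maps. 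Your approach can be salvaged (shrink $U$ and $V$ so that every matrix in them is irreducible; this is possible because the reducible matrices in ${\cal M}_\mu^k$ form a finite union of proper linear subspaces, hence a closed set missing $A_1$), but note that the concrete prescription you wrote for $k\neq\ell$ --- ``$U$ small enough to contain no multiple $B_1\otimes I_j$'' --- is not sufficient as stated, since $V$ contains matrices other than $B_1$, and it is equivalences with \emph{those} that you must also rule out. The lcm trick buys you a one-line argument in place of this whole discussion.
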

\begin{proof} Choose all $k$-dimensional open balls $O_k$  with rational numbers as their center coordinates and radius to form $\langle O_k\rangle\in S_k$. Denote the set of such balls by $O^k$. Then $|O^k|=\aleph_0$. Set $O:=\bigcup_{k=1}^{\infty} O^k$, since a countable union of countable sets is countable, $|O|=\aleph_0$.
Finally, choose $B^O$ be the set of finite intersections of elements in $O$, then it is clear that (i) $|B^O|=\aleph_0$, i.e., it is countable; (ii) $B^O$ is a topological base of ${\cal T}_P$. Hence, $\left(\Sigma_{\mu}, {\cal T}_P\right)$ is second countable.

To prove it is Hausdorff, let $\A,~\B\in \Sigma_{\mu}$, $\A\neq \B$. Assume their root elements are $A_1\in {\cal M}_{\mu}^p$ and $B_1  \in {\cal M}_{\mu}^q$, and $p\vee q=t$. Then $A_{t/p}=A_1\otimes I_{t/p}\in {\cal M}_{\mu}^t$ and   $B_{t/q}=B_1\otimes I_{t/q}\in {\cal M}_{\mu}^t$. Since
${\cal M}_{\mu}^t$ is Hausdorff, there exist open sets $O_p\cap O_q=\emptyset$, such that $A_{t/p}\in O_p$ and $B_{t/q}\in O_q$. Then we have
(i) $\left<O_p\right>$ and $\left<O_q\right>$ are open, that is,  $\left<O_p\right>,~\left<O_q\right> \in {\cal T}_P$, (ii) $\left<O_p\right> \bigcap \left<O_q\right>=\emptyset$, (iii) $\A\in \left<O_p\right>$ and $\B\in \left<O_q\right>$. Hence, $\left(\Sigma_{\mu}, {\cal T}_P\right)$ is Hausdorff.

\end{proof}

\subsection{Quotient Topology}

\begin{dfn}\label{d3.2.1} \cite{kel75} Let $(X,{\cal T}_X)$ and  $(Y,{\cal T}_Y)$ be two topological spaces, and $\PR:X\ra Y$. Assume
\begin{enumerate}
\item $\PR$ is a surjective (i.e., onto) mapping;
\item $O\in {\cal T}_Y$, if and only if, $\PR^{-1}(O)\in {\cal T}_X$.
\end{enumerate}
Then $Y$ is called a quotient space with quotient topology ${\cal T}_Y$.
\end{dfn}

Now consider $\PR: {\cal M}_{\mu}\ra \Sigma_{\mu}$. It is obvious that $\PR$ is surjective. To get the quotient topology we construct a sequential projection, which is described in Fig.2.

\begin{center}
\setlength{\unitlength}{0.6cm}
\begin{picture}(5,8)\thicklines
\put(0,0){\framebox(4,1){$\left(\Sigma_{\mu},~{\cal T}_Q\right)$}}
\put(1,2){\framebox(2,1){${\cal M}^1_{\mu}$}}
\put(1,4){\framebox(2,1){${\cal M}^2_{\mu}$}}
\put(2,6){\vector(0,-1){1}}
\put(2,4){\vector(0,-1){1}}
\put(2,2){\vector(0,-1){1}}
\put(1.3,5.5){Pr}
\put(1.3,3.5){Pr}
\put(1.3,1.5){Pr}
\end{picture}\label{Fig.2}
\end{center}
\vskip 5mm
\centerline{Fig.2: Sequential Arrangement of Projection}
\vskip 5mm

Denote the quotient topology on $\Sigma_{\mu}$ by ${\cal T}_Q$. By definition ${\cal T}_Q$ has the following structure.

\begin{prp}\label{p3.2.2} A set $U\in {\cal T}_Q$, if and only if,
\begin{align}\label{3.2.1}
\PR^{-1}(U)\bigcap{\cal M}^k_{\mu}\in {\cal T}_{{\cal M}^k_{\mu}},\quad \forall k\in \N.
\end{align}
\end{prp}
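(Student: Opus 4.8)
The plan is to unwind Definition \ref{d3.2.1} for the specific map $\PR : {\cal M}_\mu \to \Sigma_\mu$, using the fact that ${\cal M}_\mu$ carries the disjoint-union (coproduct) topology of its clopen pieces ${\cal M}_\mu^k$, as stipulated in Definition \ref{d2.3}. By item (2) of Definition \ref{d3.2.1}, $U \in {\cal T}_Q$ holds precisely when $\PR^{-1}(U) \in {\cal T}_{{\cal M}_\mu}$. So the whole proof reduces to the elementary topological fact that a subset $V \subseteq {\cal M}_\mu = \bigcup_{k=1}^\infty {\cal M}_\mu^k$ is open in ${\cal T}_{{\cal M}_\mu}$ if and only if $V \cap {\cal M}_\mu^k$ is open in ${\cal M}_\mu^k$ for every $k$. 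Applying this with $V = \PR^{-1}(U)$ gives exactly (\ref{3.2.1}).

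First I would record that each ${\cal M}_\mu^k$ is clopen in ${\cal M}_\mu$ and that the ${\cal M}_\mu^k$ partition ${\cal M}_\mu$; this is immediate from (\ref{2.6}) together with Definition \ref{d2.3}. Next, for the ``only if'' direction: if $V$ is open in ${\cal M}_\mu$, then $V \cap {\cal M}_\mu^k$ is open in ${\cal M}_\mu$ (intersection of two opens, since ${\cal M}_\mu^k$ is open), hence open in the subspace ${\cal M}_\mu^k$. For the ``if'' direction: if $V \cap {\cal M}_\mu^k$ is open in ${\cal M}_\mu^k$ for each $k$, then because ${\cal M}_\mu^k$ is open in ${\cal M}_\mu$, each $V \cap {\cal M}_\mu^k$ is open in ${\cal M}_\mu$; and $V = \bigcup_{k=1}^\infty (V \cap {\cal M}_\mu^k)$ is then open as a union of open sets. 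Finally, substitute $V = \PR^{-1}(U)$ and invoke Definition \ref{d3.2.1}(2) to conclude $U \in {\cal T}_Q \iff \PR^{-1}(U) \cap {\cal M}_\mu^k \in {\cal T}_{{\cal M}_\mu^k}$ for all $k$.

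There is essentially no serious obstacle here; the statement is a direct translation of the definition of quotient topology through the coproduct structure of ${\cal M}_\mu$. The only point requiring a word of care is making explicit that ${\cal T}_{{\cal M}_\mu}$ really is the disjoint-union topology (each component clopen, Euclidean within) — this is exactly what Definition \ref{d2.3} asserts — so that ``open'' on ${\cal M}_\mu$ means ``componentwise open.'' I would also note in passing that $\PR$ is surjective (already observed before Fig.~2), so that Definition \ref{d3.2.1} genuinely applies and ${\cal T}_Q$ is well defined. Everything else is the routine brace-opening of intersections and unions of open sets.
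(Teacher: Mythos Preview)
Your proposal is correct and matches the paper's own treatment: the paper states Proposition~\ref{p3.2.2} as an immediate consequence of the definition of quotient topology (``By definition ${\cal T}_Q$ has the following structure'') without supplying further argument, and your write-up simply spells out that unwinding via the clopen decomposition of ${\cal M}_\mu$ from Definition~\ref{d2.3}. There is nothing to add or correct.
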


\begin{cor}\label{c3.2.3}  ${\cal T}_Q\neq {\cal T}_P$.
\end{cor}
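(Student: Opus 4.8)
The plan is to show that the quotient topology ${\cal T}_Q$ is strictly finer than the product topology ${\cal T}_P$, which in particular gives ${\cal T}_Q \neq {\cal T}_P$. First I would record the easy inclusion ${\cal T}_P \subseteq {\cal T}_Q$: if $\langle O_k\rangle \in S_k$ is a subbasic open set of ${\cal T}_P$, then by Remark~\ref{r2.4}(2) no element of a different component ${\cal M}_\mu^j$ is equivalent to an element of ${\cal M}_\mu^k$, so $\PR^{-1}(\langle O_k\rangle)\cap {\cal M}_\mu^j$ is either $O_k$ (when $j=k$) or governed only by the saturation of $O_k$ under multiples/divisors within each component; in any case each such intersection is open in ${\cal M}_\mu^j$, so by Proposition~\ref{p3.2.2} we get $\langle O_k\rangle \in {\cal T}_Q$. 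Since ${\cal T}_Q$ is a topology containing the subbase $S$ of ${\cal T}_P$, it contains all of ${\cal T}_P$.

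The real content is to exhibit a set $U \in {\cal T}_Q \setminus {\cal T}_P$. The natural candidate is a set whose preimage, restricted to \emph{every} component ${\cal M}_\mu^k$, is open, but which is not expressible as an arbitrary union of finite intersections of the sets $\langle O_k\rangle$. The key structural observation is that a finite intersection $\bigcap_{i=1}^t \langle O_{k_i}\rangle$ from the product base involves only finitely many component indices $k_1,\dots,k_t$; in the other (``cofinitely many'') components it imposes no restriction at all, because $\PR$ maps those components onto all of $\Sigma_\mu$. Consequently any nonempty ${\cal T}_P$-open set is ``unrestricted'' on all but finitely many components. So I would build $U$ by choosing, for \emph{every} $k$, a proper open restriction — for instance take $\PR^{-1}(U)\cap {\cal M}_\mu^k$ to be an open ball $B_k$ that is a proper nonempty subset chosen so that the $B_k$ are mutually incompatible under the Kronecker-product relation (e.g.\ shrinking radii, or avoiding the image of a fixed point's orbit). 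Then $U \in {\cal T}_Q$ by Proposition~\ref{p3.2.2}, but $U$ cannot contain any nonempty ${\cal T}_P$-basic open set around any of its points, since such a basic set would be unrestricted on infinitely many components while $U$ is restricted on all of them; hence $U \notin {\cal T}_P$.

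I would finish by verifying the two loose ends in the construction: (i) that the chosen family $\{B_k\}$ is genuinely \emph{saturated-compatible}, i.e.\ that $U := \PR\bigl(\bigcup_k B_k\bigr)$ really does satisfy $\PR^{-1}(U)\cap {\cal M}_\mu^k = B_k$ for every $k$ and not something larger coming from equivalence classes with representatives in several $B_k$'s — this is why the $B_k$ must be chosen to avoid each other's images under $\otimes I_r$; and (ii) that $U$ is nonempty and not the whole space, so the ``unrestricted on cofinitely many components'' property of ${\cal T}_P$-open sets genuinely fails for it.

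The main obstacle I expect is step (i): ensuring the open sets $B_k$ in different components are pairwise non-overlapping after saturation under the equivalence, so that $U$ has exactly the prescribed preimage. One clean way around this is to pick a single point $\langle A\rangle \in \Sigma_\mu$ and let $U = \Sigma_\mu \setminus \{\langle A\rangle\}$ (or rather a suitable open set separating the orbit), checking that its complement's preimage meets each ${\cal M}_\mu^k$ in a single point (hence closed), so $U \in {\cal T}_Q$, while no ${\cal T}_P$-basic neighborhood of a nearby class can avoid $\langle A\rangle$ because it is unrestricted on the component containing a representative of $\langle A\rangle$ — delivering $U \notin {\cal T}_P$ with minimal computation.
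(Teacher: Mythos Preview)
Your proposal has the inclusion backwards, and this is a genuine error rather than a stylistic choice. You claim the ``easy inclusion'' ${\cal T}_P\subseteq{\cal T}_Q$, but this is precisely what fails. Take any nonempty $O\in{\cal T}_{{\cal M}_\mu^k}$ and look at $\PR^{-1}(\langle O\rangle)\cap{\cal M}_\mu^s$ for $s$ a proper multiple of $k$. An element $B\in{\cal M}_\mu^s$ lies in this preimage only if $B\sim A$ for some $A\in O$, which forces $B$ to belong to the image of the linear embedding $X\mapsto X\otimes I_{s/k}$ (or more generally to a finite union of such proper linear subspaces coming from divisors of $s$). That image is a lower-dimensional subspace of ${\cal M}_\mu^s$, hence has empty interior, so the preimage is not open in ${\cal M}_\mu^s$ and $\langle O\rangle\notin{\cal T}_Q$. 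This is exactly the paper's argument: one line showing $\langle O\rangle\in{\cal T}_P\setminus{\cal T}_Q$. In fact later (Theorem~\ref{t5.2}) the paper proves ${\cal T}_Q\subset{\cal T}_P$, so the strict containment goes the other way from what you assert.

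The source of the confusion seems to be twofold. First, your justification that ``each such intersection is open in ${\cal M}_\mu^j$'' is simply false for $j>k$, for the dimension reason above. Second, your picture of ${\cal T}_P$ as the ordinary Tychonoff product topology on a Cartesian product --- with basic open sets ``unrestricted on cofinitely many factors'' and $\PR$ surjective from each factor --- does not match the setup: $\Sigma_\mu$ is a quotient of a \emph{disjoint union}, not a product, the restriction $\PR|_{{\cal M}_\mu^k}$ is \emph{not} onto $\Sigma_\mu$ (a class with root in ${\cal M}_\mu^p$, $p\nmid k$, has no representative there), and the sets $\langle O_k\rangle$ are genuinely small subsets of $\Sigma_\mu$, not cylinders. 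Once you drop that analogy, the correct and very short argument is the paper's: exhibit $\langle O\rangle\in{\cal T}_P$ whose saturation meets higher components in non-open sets.
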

\begin{proof}
Let $O\in {\cal T}_{{\cal M}^k_{\mu}}$. By definition $\left<O\right>\in {\cal T}_P$. But
for $s>k$, one sees easily that
$$
\PR^{-1}(\left<O\right>)\bigcap{\cal M}^s_{\mu}\notin {\cal T}_{{\cal M}^s_{\mu}},
$$
because $\PR^{-1}(\left<O\right>)\bigcap{\cal M}^s_{\mu}$ is a subset of a lower dimensional subspace of ${\cal M}^s_{\mu}$. It can not be open. Hence,
$\left<O\right>\notin {\cal T}_Q$.
\end{proof}

\section{Vector Space Structure on Quotient Space}

Let $A=(a_{i,j}),~B=(b_{i,j}) \in {\cal M}_{m\times n}$. Recall that the Frobenius inner product is defined as \cite{hor85}
\begin{align}\label{4.1}
(A|~B)_F:=\dsum_{i=1}^m\dsum_{j=1}^n a_{i,j}b_{i,j}.
\end{align}
Correspondingly, the Frobenius norm is defined as
\begin{align}\label{4.2}
\|A\|_F:=\sqrt{(A|~A)_F}.
\end{align}

The following lemma comes from a straightforward computation.

\begin{lem}\label{l4.1} Let $A,~B\in {\cal M}_{m\times n}$. Then
\begin{align}\label{4.3}
\left(A\otimes I_k\;|\;B\otimes I_k\right)_F=k(A\;|\;B)_F.
\end{align}
\end{lem}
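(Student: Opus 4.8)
The plan is to prove the identity $\left(A\otimes I_k\;|\;B\otimes I_k\right)_F=k(A\;|\;B)_F$ by a direct computation, exploiting the block structure of the Kronecker product. First I would recall that $A\otimes I_k$ is the $mk\times nk$ matrix whose $(i,j)$ block (for $1\le i\le m$, $1\le j\le n$) is the $k\times k$ matrix $a_{i,j}I_k$; similarly the $(i,j)$ block of $B\otimes I_k$ is $b_{i,j}I_k$. Since the Frobenius inner product of two matrices of the same size is the sum of the entrywise products, and this sum can be organized block by block, I would write
\begin{align}\label{4.3proof}
\left(A\otimes I_k\;|\;B\otimes I_k\right)_F=\dsum_{i=1}^m\dsum_{j=1}^n \left(a_{i,j}I_k\;|\;b_{i,j}I_k\right)_F=\dsum_{i=1}^m\dsum_{j=1}^n a_{i,j}b_{i,j}\left(I_k\;|\;I_k\right)_F.
\end{align}

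The key observation is then that $\left(I_k\;|\;I_k\right)_F=\sum_{p=1}^k\sum_{q=1}^k (\d_{p,q})^2=\sum_{p=1}^k 1=k$, where $\d_{p,q}$ is the Kronecker delta giving the entries of $I_k$. Substituting this into \eqref{4.3proof} yields
\begin{align}\label{4.3conclude}
\left(A\otimes I_k\;|\;B\otimes I_k\right)_F=k\dsum_{i=1}^m\dsum_{j=1}^n a_{i,j}b_{i,j}=k(A\;|\;B)_F,
\end{align}
which is exactly \eqref{4.3}. Alternatively, one could phrase the same argument without blocks: index the rows of $A\otimes I_k$ by pairs $(i,p)$ and the columns by pairs $(j,q)$, so that $(A\otimes I_k)_{(i,p),(j,q)}=a_{i,j}\d_{p,q}$, and then $\left(A\otimes I_k\;|\;B\otimes I_k\right)_F=\sum_{i,p,j,q} a_{i,j}\d_{p,q}b_{i,j}\d_{p,q}=\sum_{i,j}a_{i,j}b_{i,j}\sum_{p,q}\d_{p,q}=k(A\;|\;B)_F$; both routes are essentially the same calculation.

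I do not expect any real obstacle here, since the statement is, as the paper says, "a straightforward computation." The only point requiring a little care is bookkeeping: making sure the block (or double-index) decomposition of the Frobenius inner product is set up correctly, i.e. that the cross terms between distinct blocks contribute nothing extra and that the off-diagonal entries $\d_{p,q}=0$ (for $p\neq q$) are correctly discarded, leaving precisely the factor $k$ from the $k$ diagonal ones. Once the indexing convention for the Kronecker product is fixed, the rest is immediate.
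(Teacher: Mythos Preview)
Your proof is correct and is precisely the ``straightforward computation'' the paper alludes to without spelling out; the paper gives no explicit argument for Lemma~\ref{l4.1}, so your blockwise expansion of the Frobenius inner product together with $(I_k\,|\,I_k)_F=k$ is exactly what is intended.
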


Taking Lemma \ref{l4.1} into consideration, we propose the following definition, which is independent of the size of matrices.

\begin{dfn}\label{d4.2} Let $A,~B\in {\cal M}_{\mu}$, where $A\in {\cal M}_{\mu}^{\a}$ and  $B\in {\cal M}_{\mu}^{\b}$. Then the weighted inner product of $A,~B$ is defined as
\begin{align}\label{4.4}
(A\;|\;B)_W:=\frac{1}{t}\left(A\otimes I_{t/\a}\;|\;~B\otimes I_{t/\b}\right)_F,
\end{align}
where $t=lcm(\a,~\b)$ is the least common multiple of $\a$ and $\b$.
\end{dfn}

Then the norm of $A\in {\cal M}_{\mu}$ is defined naturally as:
\begin{align}\label{4.401}
\|A\|:=\sqrt{(A\;|\;A)_{W}}.
\end{align}

Using Lemma \ref{l4.1} and Definition \ref{d4.2}, we have the following property.

\begin{prp}\label{p4.3} Let $A,~B\in {\cal M}_{\mu}$, if $A$ and $B$ are orthogonal, i.e., $(A\;|\;~B)_F=0$, then
$A\otimes I_{\xi}$ and $B\otimes I_{\xi}$ are also orthogonal.
\end{prp}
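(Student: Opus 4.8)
The plan is to reduce the statement to the previous lemma by a direct unwinding of Definition \ref{d4.2}. We are told $A,B\in{\cal M}_\mu$ with $A\in{\cal M}_\mu^\a$ and $B\in{\cal M}_\mu^\b$, and the hypothesis $(A\mid B)_F=0$ should really be read as $(A\otimes I_{t/\a}\mid B\otimes I_{t/\b})_F=0$ with $t=\lcm(\a,\b)$, i.e. $(A\mid B)_W=0$; this is the only sensible reading since $A$ and $B$ need not have the same shape. Granting that, the goal is to show $(A\otimes I_\xi\mid B\otimes I_\xi)_W=0$ for every $\xi\in\N$. So the first step is simply to compute the weighted inner product of $A\otimes I_\xi$ and $B\otimes I_\xi$ from the definition: these live in ${\cal M}_\mu^{\a\xi}$ and ${\cal M}_\mu^{\b\xi}$ respectively, so the relevant least common multiple is $s:=\lcm(\a\xi,\b\xi)=\xi\,\lcm(\a,\b)=\xi t$, and hence $s/(\a\xi)=t/\a$ and $s/(\b\xi)=t/\b$.

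Next I would substitute into \eqref{4.4}:
\begin{align*}
(A\otimes I_\xi\mid B\otimes I_\xi)_W
=\frac1{s}\bigl((A\otimes I_\xi)\otimes I_{s/(\a\xi)}\mid (B\otimes I_\xi)\otimes I_{s/(\b\xi)}\bigr)_F
=\frac1{\xi t}\bigl(A\otimes I_{\xi}\otimes I_{t/\a}\mid B\otimes I_{\xi}\otimes I_{t/\b}\bigr)_F.
\end{align*}
Using associativity of the Kronecker product and $I_\xi\otimes I_{t/\a}=I_{\xi t/\a}$, the right factor inside becomes $A\otimes I_{\xi t/\a}=(A\otimes I_{t/\a})\otimes I_\xi$, and similarly for $B$. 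Therefore, applying Lemma \ref{l4.1} with $k=\xi$,
\begin{align*}
(A\otimes I_\xi\mid B\otimes I_\xi)_W
=\frac1{\xi t}\,\bigl((A\otimes I_{t/\a})\otimes I_\xi\mid (B\otimes I_{t/\a})\otimes I_\xi\bigr)_F
=\frac1{\xi t}\cdot\xi\,\bigl(A\otimes I_{t/\a}\mid B\otimes I_{t/\b}\bigr)_F
=(A\mid B)_W,
\end{align*}
which is $0$ by hypothesis. Hence $A\otimes I_\xi$ and $B\otimes I_\xi$ are orthogonal in the weighted sense.

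The computation is entirely routine, so there is no real obstacle; the one point requiring care is the bookkeeping of indices — making sure $\lcm(\a\xi,\b\xi)=\xi\lcm(\a,\b)$ and that the iterated Kronecker factors $I_\xi$ and $I_{t/\a}$ can be merged and reordered so that Lemma \ref{l4.1} applies with the single factor $I_\xi$ pulled out. I would also add a remark reconciling the orthogonality notation: in the statement $(A\mid B)_F$ is being used loosely for what \eqref{4.4} calls $(A\mid B)_W$, since $(\cdot\mid\cdot)_F$ in its literal sense is only defined for matrices of equal shape; with that convention the proposition says precisely that the orthogonality relation on ${\cal M}_\mu$ descends to the equivalence classes, which is exactly the compatibility needed to put an inner product on $\Sigma_\mu$ in the sequel.
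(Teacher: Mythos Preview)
Your argument is correct and is precisely the approach the paper indicates: the paper gives no explicit proof beyond the sentence ``Using Lemma~\ref{l4.1} and Definition~\ref{d4.2}, we have the following property,'' and your computation is exactly that unwinding. One tiny slip: in the penultimate display you wrote $(B\otimes I_{t/\a})\otimes I_\xi$ where you meant $(B\otimes I_{t/\b})\otimes I_\xi$; the final line is correct regardless. Your remark on the notation is also apt --- if $(A\mid B)_F$ is read literally then $A,B$ must lie in the same ${\cal M}_\mu^\a$ and the claim is an immediate consequence of Lemma~\ref{l4.1} alone, so the reference to Definition~\ref{d4.2} in the paper supports your weighted-inner-product reading.
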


Now we are ready to define an inner product on $\Sigma_{\mu}$.

\begin{dfn}\label{d4.4} Let $\A,~\B\in \Sigma_{\mu}$. Their inner product is defined as
\begin{align}\label{4.5}
(\A\;|\;~\B):=(A\;|\;~B)_W.
\end{align}
\end{dfn}

The following proposition shows that (\ref{4.5}) is well defined.

\begin{prp}\label{p4.5} Definition \ref{d4.4} is well defined. That is, (\ref{4.5}) is independent of the choice of representatives $A$ and $B$.
\end{prp}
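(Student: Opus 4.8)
The plan is to show that if $A \sim A'$ and $B \sim B'$ with $A, A' \in \mathcal{M}_\mu$ and $B, B' \in \mathcal{M}_\mu$, then $(A \mid B)_W = (A' \mid B')_W$. First I would reduce to the case where $A'$ differs from $A$ by a single tensor factor, that is $A' = A \otimes I_s$ for some $s \in \mathbb{N}$ (and symmetrically $B' = B \otimes I_r$). This reduction is justified because any two equivalent matrices have a common ``multiple'': by Proposition \ref{p2.2}, $A = \Lambda \otimes I_\beta$ and $A' = \Lambda \otimes I_\alpha$ share the refinement $\Lambda \otimes I_{\alpha\beta}$, so it suffices to check that passing from a representative to one of its multiples does not change the weighted inner product; transitivity of ``$=$'' then handles the general case.

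Second, with $A \in \mathcal{M}_\mu^a$, $B \in \mathcal{M}_\mu^b$ fixed and $A' = A \otimes I_s \in \mathcal{M}_\mu^{as}$, I would compute both $(A \mid B)_W$ and $(A' \mid B)_W$ directly from Definition \ref{d4.2}. On the one hand, $(A \mid B)_W = \frac{1}{t}(A \otimes I_{t/a} \mid B \otimes I_{t/b})_F$ with $t = \lcm(a,b)$. On the other hand, $(A' \mid B)_W = \frac{1}{t'}(A' \otimes I_{t'/(as)} \mid B \otimes I_{t'/b})_F$ with $t' = \lcm(as, b)$. The key observation is that $A' \otimes I_{t'/(as)} = A \otimes I_s \otimes I_{t'/(as)} = A \otimes I_{t'/a}$, so both expressions are weighted Frobenius products of matrices that are tensor-inflations of $A$ and $B$ respectively. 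Using Lemma \ref{l4.1} twice — to pull a common inflation factor out of a Frobenius product — I would reduce each side to a common ``least'' inflation, namely $A \otimes I_{t/a}$ paired with $B \otimes I_{t/b}$, divided by the appropriate constant; the arithmetic identities $t \mid t'$ and the multiplicativity of the inflation factors should make the two normalized quantities coincide. The cleanest route is probably: since $t \mid t'$, write $t' = t\ell$; then $A \otimes I_{t'/a} = (A \otimes I_{t/a}) \otimes I_\ell$ and $B \otimes I_{t'/b} = (B \otimes I_{t/b}) \otimes I_\ell$, so by Lemma \ref{l4.1}, $(A \otimes I_{t'/a} \mid B \otimes I_{t'/b})_F = \ell \cdot (A \otimes I_{t/a} \mid B \otimes I_{t/b})_F$, and dividing by $t' = t\ell$ recovers $\frac{1}{t}(A \otimes I_{t/a} \mid B \otimes I_{t/b})_F = (A \mid B)_W$.

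The only subtlety — the main obstacle — is dimensional bookkeeping: one must verify that all the tensor factorizations such as $A \otimes I_s \otimes I_{t'/(as)} = A \otimes I_{t'/a}$ are valid (this uses $I_p \otimes I_q = I_{pq}$ and associativity of $\otimes$), and that the least common multiples behave as claimed, in particular that $t = \lcm(a,b)$ divides $t' = \lcm(as, b)$. The latter holds because $a \mid as$ and $b \mid b$ force $\lcm(a,b) \mid \lcm(as,b)$. Once these elementary number-theoretic and tensor-algebraic facts are in place, the equality of the normalized Frobenius products is immediate from Lemma \ref{l4.1}. Finally, applying the same argument in the second slot (replacing $B$ by $B' = B \otimes I_r$) and invoking symmetry of the inner product — or simply repeating the computation — completes the proof that $(\langle A\rangle \mid \langle B\rangle)$ in Definition \ref{d4.4} does not depend on the chosen representatives.
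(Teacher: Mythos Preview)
Your proposal is correct and follows essentially the same approach as the paper: both arguments hinge on Lemma~\ref{l4.1} together with the divisibility $t\mid t'$ of the relevant least common multiples, writing $t'=t\ell$ and cancelling the factor $\ell$. The only cosmetic difference is that the paper fixes the irreducible root elements $A_1,B_1$ and compares an arbitrary pair $(A,B)=(A_1\otimes I_\xi,\,B_1\otimes I_\eta)$ to $(A_1,B_1)$ in one step, whereas you change one slot at a time and invoke symmetry; the computations are otherwise identical.
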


\begin{proof} Assume $A_1\in \A$ and $B_1\in \B$ are irreducible. Then it is enough to prove that
\begin{align}\label{4.6}
(A\;|\;~B)_W=(A_1\;|\;~B_1)_W,\quad A\in \A,~B\in \B.
\end{align}
Assume $A_1\in {\cal M}_{\mu}^{\a}$ and $B_1\in {\cal M}_{\mu}^{\b}$.   Let
$$
\begin{array}{l}
A=A_1\otimes I_{\xi}\in {\cal M}_{\mu}^{\a \xi}\\
B=B_1\otimes I_{\eta}\in {\cal M}_{\mu}^{\b \eta}.\\
\end{array}
$$
Denote  $t=\a\vee \b$, $s=\a\xi\vee \b\eta$, and $s=t\ell$.
Using (\ref{4.4}), we have
$$
\begin{array}{ccl}
(A\;|\;~B)_W&=&\frac{1}{s}\left(A\otimes I_{\frac{s}{\a\xi}}\;|\; B\otimes I_{\frac{s}{\b\eta}}\right)_F\\
~&=&\frac{1}{s}\left(A_1\otimes I_{\frac{s}{\a}}\;|\; B_1\otimes I_{\frac{s}{\b}}\right)_F\\
~&=&\frac{1}{t\ell}\left(A_1\otimes I_{\frac{t}{\a}}\otimes I_{\ell}\;|\; B_1\otimes I_{\frac{t}{\b}}\otimes I_{\ell}\right)_F\\
~&=&\frac{1}{t}\left(A_1\otimes I_{\frac{t}{\a}}\;|\; B_1\otimes I_{\frac{t}{\b}}\right)_F\\
~&=&(A_1\;|\;~B_1)_W.
\end{array}
$$
\end{proof}

\begin{dfn}\label{dmt.1.6} \cite{tay80} A real vector space $X$ is an inner-product space, if there is a mapping $X\times X\ra \R$, denoted by $(x|~y)$, satisfying
\begin{enumerate}
\item
$$
(x+y\;|\;z)=(x\;|\;z)+(y\;|\;z),\quad x,y,z\in X.
$$
\item
$$
(x\;|\;y)=(y\;|\;x).
$$
\item
$$
(ax\;|\;y)=a(x\;|\;y),\quad a\in \R.
$$
\item
$$
(x\;|\;x)\geq 0, ~\mbox{and}~ (x\;|\;x)\neq 0 ~\mbox{if}~ x\neq 0.
$$
\end{enumerate}
\end{dfn}

To pose a vector space structure on $\Sigma_{\mu}$, we first define the addition and scalar product on it.

\begin{dfn}\label{d4.601} \cite{chepr} Let $A\in {\cal M}_{\mu}^p$ and $B\in {\cal M}_{\mu}^q$. Then
\begin{enumerate}
\item Addition:
\begin{align}\label{4.601}
A\lplus B:=\left(A\otimes I_{t/p}\right) + \left(B\otimes I_{t/q}\right),
\end{align}
where $t=p\vee q$.
\item Subtraction:
\begin{align}\label{4.602}
A\lminus B:=A\lplus (-B).
\end{align}
\end{enumerate}
\end{dfn}

\begin{dfn}\label{d4.602} \cite{chepr} Let $\A,~\B\in \Sigma_{\mu}$. Then
\begin{enumerate}
\item Addition:
\begin{align}\label{4.603}
\A\lplus \B:=\left<A\lplus B   \right>.
\end{align}
\item Subtraction:
\begin{align}\label{4.604}
\A\lminus \B:=\left<A\lminus B   \right>.
\end{align}
\item Scalar product:
\begin{align}\label{4.605}
r\A:=\left<rA\right>,\quad r\in \R.
\end{align}
\end{enumerate}
\end{dfn}

\begin{prp}\label{p4.603} \cite{chepr}
\begin{enumerate}
\item (\ref{4.603})-(\ref{4.605}) are properly defined. That is, they are independent of the choice of representatives.
\item $\Sigma_{\mu}$ with addition (subtraction) and scalar product, defined in Definition \ref{d4.602}, is a vector space.
\end{enumerate}
\end{prp}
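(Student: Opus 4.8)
The plan is to handle the two assertions in turn; item~1 (well-definedness) carries the only real content, while item~2 is a routine axiom check. \emph{Well-definedness.} For the scalar product (\ref{4.605}): if $A'\sim A$, Proposition~\ref{p2.2} provides a $\Lambda$ and integers $h,h'$ with $A=\Lambda\otimes I_{h}$ and $A'=\Lambda\otimes I_{h'}$, whence $rA=(r\Lambda)\otimes I_{h}$ and $rA'=(r\Lambda)\otimes I_{h'}$ are both multiples of $r\Lambda$, so $\langle rA\rangle=\langle r\Lambda\rangle=\langle rA'\rangle$. For the addition (\ref{4.603}) I would argue as in the proof of Proposition~\ref{p4.5}, reducing both representatives to their root elements: let $A_1\in{\cal M}_{\mu}^{\a}$ and $B_1\in{\cal M}_{\mu}^{\b}$ be the irreducible representatives of $\langle A\rangle$ and $\langle B\rangle$, write $A=A_1\otimes I_{\xi}$, $B=B_1\otimes I_{\eta}$, and put $t=\a\vee\b$ and $s=\a\xi\vee\b\eta$. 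Since $\a\mid\a\xi\mid s$ and $\b\mid\b\eta\mid s$ we have $t\mid s$, say $s=t\ell$. Using the associativity of the Kronecker product, the rule $Z\otimes I_c\otimes I_d=Z\otimes I_{cd}$, and its bilinearity, definition (\ref{4.601}) gives
\begin{align*}
A\lplus B&=\bigl(A_1\otimes I_{s/\a}\bigr)+\bigl(B_1\otimes I_{s/\b}\bigr)\\
&=\Bigl[\bigl(A_1\otimes I_{t/\a}\bigr)+\bigl(B_1\otimes I_{t/\b}\bigr)\Bigr]\otimes I_{\ell}=(A_1\lplus B_1)\otimes I_{\ell},
\end{align*}
so $A\lplus B\sim A_1\lplus B_1$, a value depending only on the two classes. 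Well-definedness of the subtraction (\ref{4.604}) then follows since $\langle A\lminus B\rangle=\langle A\rangle\lplus(-1)\langle B\rangle$.

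\emph{Vector space axioms.} With $\lplus$ and the scalar product well defined on $\Sigma_{\mu}$, I would verify each axiom representative-wise. Commutativity of $\lplus$ is inherited from matrix addition in (\ref{4.601}). For associativity, take $A\in{\cal M}_{\mu}^{p}$, $B\in{\cal M}_{\mu}^{q}$, $C\in{\cal M}_{\mu}^{r}$ and set $t=p\vee q\vee r$; expanding $(A\lplus B)\lplus C$ with the same Kronecker identities, and using that $\vee$ is associative, shows that both $(\langle A\rangle\lplus\langle B\rangle)\lplus\langle C\rangle$ and $\langle A\rangle\lplus(\langle B\rangle\lplus\langle C\rangle)$ equal $\langle A\otimes I_{t/p}+B\otimes I_{t/q}+C\otimes I_{t/r}\rangle$. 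The zero vector is $\mathbf{0}:=\langle 0_{\mu_y\times\mu_x}\rangle$; every zero matrix of ${\cal M}_{\mu}$ lies in this single class, and $\langle A\rangle\lplus\mathbf{0}=\langle A\rangle$ because $A\lplus 0_{\mu_y\times\mu_x}=A$ (the relevant least common multiple being $p\vee 1=p$). The additive inverse of $\langle A\rangle$ is $(-1)\langle A\rangle=\langle -A\rangle$, since $\langle A\rangle\lplus\langle -A\rangle=\langle A+(-A)\rangle=\langle 0_{p\mu_y\times p\mu_x}\rangle=\mathbf{0}$, the last equality holding because $0_{p\mu_y\times p\mu_x}=0_{\mu_y\times\mu_x}\otimes I_{p}\sim 0_{\mu_y\times\mu_x}$. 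Finally the scalar axioms reduce to matrix identities: $r(A\lplus B)=(rA)\lplus(rB)$ by distributing $r$ through (\ref{4.601}); $(r+r')A=rA\lplus r'A=rA+r'A$ (the two summands sharing the common size $p$); while $(rr')A=r(r'A)$ and $1\cdot A=A$ are immediate.

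\emph{Main obstacle.} There is no serious difficulty here; the one point that requires care is the well-definedness of $\lplus$ in item~1 --- the Kronecker-product and least-common-multiple bookkeeping behind the displayed identity $A\lplus B=(A_1\lplus B_1)\otimes I_{\ell}$ --- together with the observation, needed for the additive-inverse axiom, that $A\lplus(-A)$ is a zero matrix whose size depends on $A$ yet which collapses to the single class $\mathbf{0}$ in $\Sigma_{\mu}$. This last point is precisely why the vector-space structure must live on the quotient rather than on ${\cal M}_{\mu}$ itself; the remaining verifications are routine transcriptions of standard matrix-algebra identities.
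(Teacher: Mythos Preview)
Your argument is correct. The paper itself does not prove this proposition; it simply cites \cite{chepr} and moves on. Your write-up therefore supplies what the paper omits, and the method you use --- reducing arbitrary representatives to the root elements $A_1,B_1$ and showing $A\lplus B=(A_1\lplus B_1)\otimes I_{\ell}$ via the divisibility $t\mid s$ --- is exactly the natural one, paralleling the paper's own proof of Proposition~\ref{p4.5}. The axiom checks in item~2 are routine and your treatment of them (in particular the observation that all zero matrices of shape $k\mu_y\times k\mu_x$ collapse to a single class, which is what makes additive inverses work) is the right point to highlight.
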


Furthermore,  it is easy to verify the following result.

\begin{thm}\label{t4.7} The vector space $(\Sigma_{\mu},\lplus)$ with the inner product defined by (\ref{4.5}) is an inner product space.
\end{thm}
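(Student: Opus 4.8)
The plan is to invoke Proposition \ref{p4.603}, which already establishes that $(\Sigma_{\mu},\lplus)$ is a real vector space, and then to verify the four axioms of Definition \ref{dmt.1.6} for the pairing $(\cdot\,|\,\cdot):\Sigma_{\mu}\times\Sigma_{\mu}\ra\R$ defined by (\ref{4.5}). The recurring device throughout would be a \emph{common-level reduction}: given classes $\A,\B,\C\in\Sigma_{\mu}$ with representatives $A\in{\cal M}_{\mu}^{p}$, $B\in{\cal M}_{\mu}^{q}$, $C\in{\cal M}_{\mu}^{r}$, I would choose $N$ a common multiple of $p,q,r$ and replace $A,B,C$ by $\tilde A=A\otimes I_{N/p}$, $\tilde B=B\otimes I_{N/q}$, $\tilde C=C\otimes I_{N/r}$, all genuine matrices of the common shape $N\mu_y\times N\mu_x$. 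By Proposition \ref{p4.5} the value of (\ref{4.5}) is unaffected by this replacement, and for matrices of one and the same size (\ref{4.4}) collapses to $(\,\cdot\,|\,\cdot\,)_W=\tfrac1N(\,\cdot\,|\,\cdot\,)_F$, so every computation reduces to the ordinary Frobenius inner product, to which Lemma \ref{l4.1} and plain bilinearity apply.

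Symmetry, homogeneity, and positive definiteness need essentially no reduction. Symmetry of (\ref{4.5}) follows from symmetry of $(\,\cdot\,|\,\cdot\,)_F$ together with $\lcm(\a,\b)=\lcm(\b,\a)$ in (\ref{4.4}). For homogeneity, $rA$ has the same shape as $A$, so $(r\A\;|\;\B)=(rA\;|\;B)_W=\tfrac1t\big((rA)\otimes I_{t/\a}\;\big|\;B\otimes I_{t/\b}\big)_F=r\,(A\;|\;B)_W=r\,(\A\;|\;\B)$, using $(rA)\otimes I_{t/\a}=r(A\otimes I_{t/\a})$ and linearity of $(\,\cdot\,|\,\cdot\,)_F$. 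Positive definiteness is also immediate: writing $A\in{\cal M}_{\mu}^{\a}$, we get $(\A\;|\;\A)=\tfrac1{\a}\|A\|_F^{2}\ge 0$, and this vanishes iff $A=0$; since the additive identity of $(\Sigma_{\mu},\lplus)$ is the class $\left<0_{\mu_y\times\mu_x}\right>$, whose members are exactly the zero matrices of the shapes occurring in ${\cal M}_{\mu}$, it follows that $(\A\;|\;\A)=0$ iff $\A=\0$.

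The one step that genuinely requires the common-level reduction, and the step I expect to be the main obstacle, is additivity in the first argument, since the size $s=p\vee q$ used to define $\lplus$ in (\ref{4.601}) need not coincide with the least common multiple appearing in (\ref{4.4}). Here I would first observe that $(A\lplus B)\otimes I_{N/s}=(A\otimes I_{N/p})+(B\otimes I_{N/q})=\tilde A+\tilde B$, whence $\A\lplus\B=\left<\tilde A+\tilde B\right>$, and then compute
$$
(\A\lplus\B\;|\;\C)=(\tilde A+\tilde B\;|\;\tilde C)_W=\tfrac1N(\tilde A+\tilde B\;|\;\tilde C)_F=\tfrac1N(\tilde A\;|\;\tilde C)_F+\tfrac1N(\tilde B\;|\;\tilde C)_F=(\A\;|\;\C)+(\B\;|\;\C),
$$
where the last equality uses Proposition \ref{p4.5} once more, since $\tilde A\sim A$, $\tilde B\sim B$, $\tilde C\sim C$. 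With all four axioms verified, $(\Sigma_{\mu},\lplus)$ equipped with the inner product (\ref{4.5}) is an inner product space.
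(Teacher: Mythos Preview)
Your proposal is correct. The paper does not actually supply a proof of Theorem~\ref{t4.7}; it merely prefaces the statement with ``it is easy to verify the following result'' and moves on. Your write-up fills in precisely the routine verification the authors leave to the reader: invoke Proposition~\ref{p4.603} for the vector-space structure, then check the four axioms of Definition~\ref{dmt.1.6} using Proposition~\ref{p4.5} to pass to representatives at a common level $N$, where $(\cdot\,|\,\cdot)_W$ reduces to $\tfrac{1}{N}(\cdot\,|\,\cdot)_F$ and ordinary bilinearity of the Frobenius inner product applies. This is exactly the intended (and essentially only) approach, and your handling of the one nontrivial point---additivity, where the size $s=p\vee q$ from (\ref{4.601}) need not match the lcm in (\ref{4.4})---is clean: since $p\mid N$ and $q\mid N$ force $s\mid N$, the identity $(A\lplus B)\otimes I_{N/s}=\tilde A+\tilde B$ holds and the computation goes through.
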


Then the norm of $\A\in \Sigma_{\mu}$ is defined naturally as:
\begin{align}\label{4.7}
\|\A\|:=\sqrt{(\A\;|\;\A)}.
\end{align}

The following is some standard results for inner product space.

\begin{thm}\label{t4.8} Assume $\A,~\B\in \Sigma_{\mu}$. Then we have the following
\begin{enumerate}
\item (Schwarz Inequality)
\begin{align}\label{4.8}
|(\A\;|\;~\B)|\leq \|\A\|\|\B\|;
\end{align}
\item (Triangular Inequality)
\begin{align}\label{4.9}
\|\A\lplus \B\| \leq \|\A\|+\|\B\|;
\end{align}
\item (Parallelogram Law)
\begin{align}\label{4.10}
\begin{array}{l}
\|\A\lplus \B\|^2+\|\A\lminus \B\|^2 \\
~~= 2\|\A\|^2+2\|\B\|^2.
\end{array}
\end{align}
\end{enumerate}
\end{thm}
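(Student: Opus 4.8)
The plan is to reduce the three inequalities to the corresponding classical facts about real inner product spaces, since Theorem \ref{t4.7} already tells us that $(\Sigma_{\mu},\lplus)$ equipped with $(\cdot\;|\;\cdot)$ satisfies all four axioms of Definition \ref{dmt.1.6}. None of the three statements uses anything about matrices beyond the abstract inner product structure, so the whole proof is really a matter of quoting the standard derivations and making sure the non-standard notation ($\lplus$, $\lminus$, $\langle\cdot\rangle$) is handled consistently.

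For the Schwarz inequality, I would argue as usual: fix $\A,\B\in\Sigma_{\mu}$, and if $\B=\langle 0\rangle$ the inequality is trivial, so assume $\|\B\|\neq 0$. For every $r\in\R$ consider the quantity $\|\A\lminus r\B\|^2=(\A\lminus r\B\;|\;\A\lminus r\B)$, which is nonnegative by axiom (4). Expanding via bilinearity and symmetry (axioms (1)--(3)) gives $\|\A\|^2-2r(\A\;|\;\B)+r^2\|\B\|^2\geq 0$ for all $r$; choosing $r=(\A\;|\;\B)/\|\B\|^2$ yields $\|\A\|^2-(\A\;|\;\B)^2/\|\B\|^2\geq 0$, which rearranges to $(\A\;|\;\B)^2\leq\|\A\|^2\|\B\|^2$, i.e.\ (\ref{4.8}). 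One small point worth checking explicitly is that $r\B$ and $\A\lminus r\B$ are the scalar multiple and difference defined in Definition \ref{d4.602}, and that $\lminus$ distributes correctly over the inner product; this follows from Proposition \ref{p4.603}, which guarantees $(\Sigma_{\mu},\lplus)$ is a genuine vector space, together with the bilinearity already established in Theorem \ref{t4.7}.

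For the triangular inequality I would expand $\|\A\lplus\B\|^2=\|\A\|^2+2(\A\;|\;\B)+\|\B\|^2$, bound the middle term using Schwarz by $2\|\A\|\,\|\B\|$, and recognize the right-hand side as $(\|\A\|+\|\B\|)^2$; taking square roots gives (\ref{4.9}). The parallelogram law is even more direct: expand both $\|\A\lplus\B\|^2$ and $\|\A\lminus\B\|^2$ by bilinearity, and the cross terms $\pm 2(\A\;|\;\B)$ cancel when the two are added, leaving $2\|\A\|^2+2\|\B\|^2$, which is (\ref{4.10}).

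There is no real obstacle here; the only thing requiring any care is notational bookkeeping, namely that the operations $\lplus$, $\lminus$ and scalar multiplication on $\Sigma_{\mu}$ interact with $(\cdot\;|\;\cdot)$ exactly as ordinary vector operations do with an ordinary inner product. Since Theorem \ref{t4.7} and Proposition \ref{p4.603} together certify precisely this, the proof can simply state that $(\Sigma_{\mu},\lplus,(\cdot\;|\;\cdot))$ is a real inner product space and then invoke the classical Schwarz, triangle, and parallelogram identities, which hold in any such space; if a self-contained presentation is preferred, the three short expansions above supply it.
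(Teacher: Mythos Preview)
Your proposal is correct and matches the paper's approach: the paper does not give a proof of Theorem~\ref{t4.8} at all, simply introducing it with ``The following is some standard results for inner product space,'' i.e.\ treating the three statements as immediate consequences of Theorem~\ref{t4.7}. Your write-up supplies exactly the standard derivations that the paper leaves implicit, so it is fully consistent with (and more detailed than) the paper's own treatment.
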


Note that the above properties show that $\Sigma_{\mu}$ is a normed space. Unfortunately, it is not difficult to show that $\Sigma_{\mu}$ is not a Hilbert space.

\section{ Matric and  Matric Topology}
Using the norm defined in previous section, it is ready to verify that $\Sigma_{\mu}$ is a metric space. We state it as a theorem.

\begin{thm}\label{t5.1} $\Sigma_{\mu}$ with distance
\begin{align}\label{5.1}
d(\A,\B):=\|\A\lminus \B\|, \quad \A,~\B\in \Sigma_{\mu}
\end{align}
is a metric space.
\end{thm}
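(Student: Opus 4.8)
The plan is to verify the three metric axioms for $d(\A,\B) = \|\A \lminus \B\|$ using the normed-space structure established in Theorem \ref{t4.7} and Theorem \ref{t4.8}. Since $(\Sigma_{\mu}, \lplus)$ is already known to be a vector space with a norm $\|\cdot\|$ arising from an inner product, this reduces to the standard fact that a norm induces a metric via $d(x,y) = \|x - y\|$; the only subtlety is keeping track of the formal subtraction operator $\lminus$ and its interaction with $\lplus$.

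First I would check \emph{positivity and non-degeneracy}: $d(\A,\B) = \|\A \lminus \B\| \geq 0$ always, since the norm is non-negative by (\ref{4.7}) and axiom (4) of Definition \ref{dmt.1.6}. Moreover $d(\A,\B) = 0$ iff $(\A \lminus \B \mid \A \lminus \B) = 0$ iff $\A \lminus \B$ is the zero class, which by the vector space structure of Proposition \ref{p4.603} happens iff $\A = \B$. Second, \emph{symmetry}: $d(\A,\B) = \|\A \lminus \B\| = \|(-1)(\B \lminus \A)\| = |-1| \cdot \|\B \lminus \A\| = d(\B,\A)$, using the scalar-multiplication compatibility of the norm (which follows from axiom (3) of the inner product). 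Third, the \emph{triangle inequality}: writing $\A \lminus \C = (\A \lminus \B) \lplus (\B \lminus \C)$ in $\Sigma_{\mu}$, I would apply the triangular inequality (\ref{4.9}) from Theorem \ref{t4.8} to get
\begin{align}\label{5.1pf}
d(\A,\C) = \|(\A \lminus \B) \lplus (\B \lminus \C)\| \leq \|\A \lminus \B\| + \|\B \lminus \C\| = d(\A,\B) + d(\B,\C).
\end{align}

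The main obstacle, such as it is, is purely bookkeeping rather than mathematical depth: one must confirm that the formal operations $\lplus$, $\lminus$, and scalar product on $\Sigma_{\mu}$ genuinely obey the identities $\A \lminus \C = (\A \lminus \B) \lplus (\B \lminus \C)$ and $\A \lminus \B = (-1)(\B \lminus \A)$, i.e., that $\Sigma_{\mu}$ behaves as an honest real vector space. This is exactly what Proposition \ref{p4.603} guarantees, so the verification is routine once that result is invoked. I would therefore keep the proof short, citing Theorem \ref{t4.8} for the triangle inequality and Definition \ref{dmt.1.6}(4) together with Proposition \ref{p4.603} for the positive-definiteness, and remark that $d$ is precisely the metric canonically induced by the norm (\ref{4.7}) on the vector space $(\Sigma_{\mu}, \lplus)$.
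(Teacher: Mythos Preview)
Your proposal is correct and is exactly the argument the paper has in mind: the paper does not give a written-out proof of Theorem~\ref{t5.1} at all, but simply remarks before stating it that ``using the norm defined in previous section, it is ready to verify that $\Sigma_{\mu}$ is a metric space,'' i.e., it appeals to the standard fact that the norm~(\ref{4.7}) on the vector space $(\Sigma_{\mu},\lplus)$ induces a metric. Your verification of positivity, symmetry, and the triangle inequality via Proposition~\ref{p4.603}, Definition~\ref{dmt.1.6}(4), and Theorem~\ref{t4.8} is precisely the routine check the paper is leaving to the reader.
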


\begin{thm}\label{t5.2} Consider $\Sigma_{\mu}$. The metric topology determined by the distance $d$ is denoted by ${\cal T}_d$. Then
\begin{align}\label{5.2}
{\cal T}_d\subset {\cal T}_Q\subset{\cal T}_P.
\end{align}
\end{thm}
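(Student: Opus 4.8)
The plan is to establish the two inclusions in \eqref{5.2} separately, both by checking the defining property of the quotient topology given in Proposition \ref{p3.2.2}, namely that $U \in {\cal T}_Q$ iff $\PR^{-1}(U) \cap {\cal M}^k_{\mu} \in {\cal T}_{{\cal M}^k_{\mu}}$ for all $k$. Note that the strict inequality ${\cal T}_Q \subsetneq {\cal T}_P$ was already shown in Corollary \ref{c3.2.3}, so what remains is ${\cal T}_Q \subseteq {\cal T}_P$ and ${\cal T}_d \subseteq {\cal T}_Q$.

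For the inclusion ${\cal T}_Q \subseteq {\cal T}_P$, I would take $U \in {\cal T}_Q$ and show it is a union of basic open sets of ${\cal T}_P$. Given $\A \in U$ with root element $A_1 \in {\cal M}^p_\mu$, the set $\PR^{-1}(U) \cap {\cal M}^p_\mu$ is open in ${\cal M}^p_\mu$ and contains $A_1$, so it contains a Euclidean ball $O$ around $A_1$; then $\langle O \rangle$ is a subbasic (hence basic) open set of ${\cal T}_P$ containing $\A$. The point to verify is that $\langle O \rangle \subseteq U$: if $\B \in \langle O \rangle$ then some representative $B \in O \subseteq \PR^{-1}(U)$, so $\PR(B) = \B \in U$. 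Since every point of $U$ has such a ${\cal T}_P$-neighborhood inside $U$, $U \in {\cal T}_P$.

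For the inclusion ${\cal T}_d \subseteq {\cal T}_Q$, I would take an open $d$-ball $B_d(\A, \varepsilon)$ and verify condition \eqref{3.2.1}: for each $k$, the set $\PR^{-1}(B_d(\A,\varepsilon)) \cap {\cal M}^k_\mu$ must be Euclidean-open in ${\cal M}^k_\mu$. This reduces to showing that the map $C \mapsto d(\langle C \rangle, \A)$ is continuous on ${\cal M}^k_\mu$ with its Euclidean topology, since then the preimage of $[0,\varepsilon)$ is open. By the triangle inequality \eqref{4.9} for the quotient norm, $|d(\langle C \rangle, \A) - d(\langle C' \rangle, \A)| \le \|\langle C \rangle \lminus \langle C' \rangle\| = \|\langle C \lminus C' \rangle\|$, and for $C, C'$ in the same component ${\cal M}^k_\mu$ the operation $\lplus$ is ordinary matrix addition, so $\|\langle C \lminus C'\rangle\| = \|C - C'\|_F / \sqrt{k}$ (using \eqref{4.4} and the fact that $C-C'$ has the same size as its root times $I_k$), which is continuous — indeed Lipschitz — in the Euclidean coordinates of ${\cal M}^k_\mu$. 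Hence $\PR^{-1}(B_d(\A,\varepsilon)) \cap {\cal M}^k_\mu = \{C \in {\cal M}^k_\mu : d(\langle C\rangle,\A) < \varepsilon\}$ is open, so $B_d(\A,\varepsilon) \in {\cal T}_Q$; taking unions gives ${\cal T}_d \subseteq {\cal T}_Q$.

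The main obstacle I anticipate is the bookkeeping in the ${\cal T}_d \subseteq {\cal T}_Q$ step: one must be careful that for $C, C' \in {\cal M}^k_\mu$ the weighted norm $\|\langle C \lminus C' \rangle\|$ genuinely reduces to a fixed positive multiple of the Euclidean distance $\|C - C'\|_F$, which requires tracking how the root elements of $C$ and $C'$ interact under $\lminus$ — in general their roots differ, but since both live in ${\cal M}^k_\mu$ the subtraction $C \lminus C'$ is computed at level $k$ and then $\|\cdot\|$ rescales by $1/\sqrt{\ell}$ where $\ell$ is the blow-up factor from the root of $C \lminus C'$ up to level $k$; this factor depends on $C, C'$, so the map is continuous but perhaps only locally Lipschitz rather than globally Lipschitz. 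Continuity is all that is needed, and it follows since on the (open, dense) set where $C - C'$ is itself irreducible of size $k$ the constant is $1/\sqrt{k}$, while at reducible points one gets a strictly smaller value, and a short semicontinuity argument closes the gap. Everything else is a direct application of the definitions and of Theorem \ref{t4.8}.
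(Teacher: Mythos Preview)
Your argument for both inclusions is correct and follows essentially the same route as the paper: for ${\cal T}_Q\subset{\cal T}_P$ you (like the paper) pick a representative of $\A$ at some level $s$, use that $\PR^{-1}(U)\cap{\cal M}^s_\mu$ is open there, and push forward an open neighborhood via $\langle\,\cdot\,\rangle$; for ${\cal T}_d\subset{\cal T}_Q$ both you and the paper ultimately use that the restriction of the quotient metric $d$ to each level ${\cal M}^k_\mu$ is a constant multiple of the Frobenius metric.

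Your anticipated obstacle, however, is a phantom, and your description of it contains an error. By Proposition~\ref{p4.5} the quotient norm is independent of representative, so for $C,C'\in{\cal M}^k_\mu$ one always has $\|\langle C\lminus C'\rangle\|=\|C-C'\|_F/\sqrt{k}$, \emph{regardless} of whether $C-C'$ is reducible: if $C-C'=D\otimes I_\ell$ with $D\in{\cal M}^{k/\ell}_\mu$, then Lemma~\ref{l4.1} gives $\|C-C'\|_F^2=\ell\,\|D\|_F^2$, so $\|C-C'\|_F/\sqrt{k}=\|D\|_F/\sqrt{k/\ell}$ and the two computations agree. Thus $C\mapsto d(\langle C\rangle,\A)$ is globally $(1/\sqrt{k})$-Lipschitz on ${\cal M}^k_\mu$; your claim that ``at reducible points one gets a strictly smaller value'' is false, and no semicontinuity argument is needed. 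The paper phrases this step even more directly: the $d$-ball $B_\epsilon(\langle p\rangle)$ pulls back along $\PR$ to the weighted-norm ball $B_\epsilon(p)$ in each ${\cal M}^s_\mu$, which is a rescaled Frobenius ball and hence Euclidean-open.
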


\begin{proof}
First, we prove the first half inclusion. Assume $V\in {\cal T}_d$, then for each $\left<p\right>\in V$ there exists an $\epsilon>0$, such that a ball $B_{\epsilon}( \left<p\right>)\subset V$. Because $d(\A,\B)=d(A,B)$, we have
$$
B_{\epsilon}(p)\subset \PR^{-1}\big|_{{\cal M}_{\mu}^s}(V),\quad p\in \left<p\right>\bigcap {\cal M}_{\mu}^s.
$$
That is,  $\PR^{-1}\big|_{{\cal M}_{\mu}^s}(V)$ is open in ${\cal M}_{\mu}^s$. By definition of quotient topology, $V$ is an open set in ${\cal T}_Q$. It follows that ${\cal T}_d\subset {\cal T}_Q$.

Next, we prove the second half inclusion. Assume $V\in {\cal T}_Q$, then for each $\left<p\right>\in V$ we can find $p_0\in \left<p\right>\bigcap {\cal M}_{\mu}^s$. By definition of ${\cal T}_Q$, $V_s:=\PR^{-1}(V)\bigcap {\cal M}_{\mu}^s$ is open and $p_0\in V_s$. Then
$$
\left<V_s\right>\subset V.
$$
By definition of  ${\cal T}_P$ we have
$$
\left<p\right>\in \left<V_s\right>\in {\cal T}_P,
$$
which means $V\in  {\cal T}_P$. \footnote{An alternative more holistic proof is provided by an anonymous reviewer as follows: for all $V\in{\cal T}_Q$, one has $Pr^{-1}(V)\cap {\cal M}_{\mu}^k\in{\cal T}_{{\cal M}_{\mu}^k}$ for all positive integers $k$. Note that $Pr^{-1}(V)=\bigcup_{k=1}^{\infty}(Pr^{-1}(V)\cap {\cal M}_{\mu}^k)$, then by subjectivity of $Pr$, $V=Pr(Pr^{-1}(V))=\bigcup_{k=1}^{\infty}Pr(Pr^{-1}(V))\cap
{\cal M}_{\mu}^k)\in{\cal T}_P$.}

\end{proof}

Next, we consider the upper-bounded subspace
$$
\Sigma_{\mu}^{[\cdot,k]}:=\bigcup_{i=1}^k \Sigma_{\mu}^i=\bigcup_{i=1}^k {\cal M}_{\mu}^i/\sim.
$$
 Then we have the following result:

\begin{thm}\label{t5.3} Consider $\Sigma_{\mu}^{[\cdot,k]}$.  Then
\begin{align}\label{5.3}
{\cal T}_d\big|_{\Sigma_{\mu}^{[\cdot,k]}}= {\cal T}_Q\big|_{\Sigma_{\mu}^{[\cdot,k]}}\subset {\cal T}_P\big|_{\Sigma_{\mu}^{[\cdot,k]}}.
\end{align}
\end{thm}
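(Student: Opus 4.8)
The plan is to prove the two assertions in (\ref{5.3}) separately: first the equality ${\cal T}_d\big|_{\Sigma_{\mu}^{[\cdot,k]}} = {\cal T}_Q\big|_{\Sigma_{\mu}^{[\cdot,k]}}$, and then the inclusion into ${\cal T}_P\big|_{\Sigma_{\mu}^{[\cdot,k]}}$. The inclusion ${\cal T}_d \subset {\cal T}_Q \subset {\cal T}_P$ already holds globally on $\Sigma_{\mu}$ by Theorem \ref{t5.2}, and restriction of topologies to a subspace preserves inclusions; so the inclusion part of (\ref{5.3}) is immediate, and the real content is the reverse inclusion ${\cal T}_Q\big|_{\Sigma_{\mu}^{[\cdot,k]}} \subset {\cal T}_d\big|_{\Sigma_{\mu}^{[\cdot,k]}}$.

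For that reverse inclusion, the key structural fact I would use is that on the upper-bounded piece every equivalence class $\langle A\rangle \in \Sigma_{\mu}^{[\cdot,k]}$ has a \emph{largest} representative lying in ${\cal M}_{\mu}^{k!}$ (or more economically in ${\cal M}_{\mu}^{m}$ where $m=\lcm(1,2,\dots,k)$): if $A_1$ is the root element, say $A_1\in{\cal M}_{\mu}^{\a}$ with $\a\le k$, then $A_1\otimes I_{m/\a}\in {\cal M}_{\mu}^{m}$ is a canonical representative, and the map $\Sigma_{\mu}^{[\cdot,k]}\to{\cal M}_{\mu}^{m}$ sending $\langle A\rangle$ to this canonical representative is a well-defined injection. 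I would show this injection is a homeomorphism onto its image when the source carries ${\cal T}_Q\big|_{\Sigma_{\mu}^{[\cdot,k]}}$ and the target the Euclidean topology, and is simultaneously an isometry (up to the constant scaling in Lemma \ref{l4.1}) onto its image for the metric $d$; since on a subset of Euclidean space the metric topology coincides with the subspace topology, the two topologies on $\Sigma_{\mu}^{[\cdot,k]}$ must agree. Concretely: take $V\in {\cal T}_Q\big|_{\Sigma_{\mu}^{[\cdot,k]}}$ and $\langle p\rangle\in V$; then $\PR^{-1}(V)\cap {\cal M}_{\mu}^{m}$ is open in ${\cal M}_{\mu}^{m}$ and contains the canonical representative $p_m$ of $\langle p\rangle$, so it contains a Euclidean ball $B_\delta(p_m)$. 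Because of the homeomorphism/isometry property, $\langle B_\delta(p_m)\rangle$ is a $d$-ball (of radius $\delta/\sqrt{m}$) around $\langle p\rangle$ contained in $V$, so $V\in {\cal T}_d$.

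The step I expect to be the main obstacle is verifying that the canonical-representative map is actually a homeomorphism onto its (open? or merely locally closed?) image in ${\cal M}_{\mu}^{m}$ — in particular, that the ${\cal T}_Q$-open sets correspond exactly to the Euclidean-open subsets of the image. One direction (continuity of $\langle A\rangle\mapsto p_m$, equivalently $\PR$-pullbacks of ${\cal T}_Q$-sets being open) is built into the definition of ${\cal T}_Q$ via Proposition \ref{p3.2.2}; the subtler direction is that a ${\cal T}_Q$-open set, when intersected with each ${\cal M}_{\mu}^i$ for $i\le k$, is open \emph{compatibly}, i.e. these intersections are the preimages under the tensoring maps $A\mapsto A\otimes I_{m/i}$ of one common open subset of ${\cal M}_{\mu}^{m}$. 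This is where the finiteness of $k$ is essential (contrast Corollary \ref{c3.2.3}, where the obstruction was precisely that representatives of arbitrarily large size exist): with only finitely many components, the tensoring embeddings ${\cal M}_{\mu}^i\hookrightarrow{\cal M}_{\mu}^{m}$ are closed embeddings onto linear subspaces whose union's relative topology is controlled, and the compatibility conditions (\ref{3.2.1}) for $k$ values of the index can be bundled into a single open condition on ${\cal M}_{\mu}^{m}$. I would spell out this bundling carefully, checking that a set $U\subset\Sigma_{\mu}^{[\cdot,k]}$ satisfying (\ref{3.2.1}) for all $i\le k$ pulls back to an open set in ${\cal M}_{\mu}^{m}$ that is saturated under $\sim$, and conversely. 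The remaining verifications — that $d$ restricted to the image is the Euclidean metric up to the scalar $1/\sqrt{m}$ (immediate from Definition \ref{d4.2} and Lemma \ref{l4.1}), and that metric topology equals subspace topology on a subset of $\R^N$ — are routine.
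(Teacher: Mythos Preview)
Your approach is essentially the same as the paper's: both arguments work by identifying $\Sigma_{\mu}^{[\cdot,k]}$ with (the image of) a single Euclidean component and showing that both ${\cal T}_d$ and ${\cal T}_Q$ coincide with the induced topology there. The paper phrases this via an intermediate object $\langle {\cal T}_{{\cal M}_{\mu}^k}\rangle:=\{\langle O\rangle : O\in{\cal T}_{{\cal M}_{\mu}^k}\}$ and proves ${\cal T}_d\big|_{\Sigma_{\mu}^{[\cdot,k]}}=\langle {\cal T}_{{\cal M}_{\mu}^k}\rangle={\cal T}_Q\big|_{\Sigma_{\mu}^{[\cdot,k]}}$, whereas you phrase it via the canonical-representative embedding and a homeomorphism/isometry argument.

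Two differences are worth noting. First, you invoke Theorem \ref{t5.2} directly for the chain of inclusions, which is a clean shortcut the paper does not take; the paper instead rederives the inclusion into ${\cal T}_P$ from (\ref{5.5}). Second, and more substantively, you embed into ${\cal M}_{\mu}^{m}$ with $m=\lcm(1,\dots,k)$, while the paper embeds into ${\cal M}_{\mu}^{k}$ itself. Your choice is the more careful one: a class whose root lies in ${\cal M}_{\mu}^{\alpha}$ with $\alpha\le k$ but $\alpha\nmid k$ has no representative in ${\cal M}_{\mu}^{k}$, so the paper's assertion that ``${\cal M}_{\mu}^{s}$ is a subspace of ${\cal M}_{\mu}^{k}$'' for $s<k$ is only literally true when $s\mid k$. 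Your use of $m$ avoids this issue, and your remark that the tensoring maps ${\cal M}_{\mu}^{i}\hookrightarrow{\cal M}_{\mu}^{m}$ are closed linear embeddings whose finite union carries the coherent topology is exactly what is needed to conclude that a set satisfying (\ref{3.2.1}) for all $i\le k$ corresponds to a relatively open subset of the image. One small correction: in your concrete step, $\PR^{-1}(V)\cap{\cal M}_{\mu}^{m}$ is open in the \emph{image} of the embedding (a finite union of linear subspaces), not in all of ${\cal M}_{\mu}^{m}$; but that is all you need, and your subsequent discussion already accounts for it.
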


\begin{proof} Define
 \begin{align}\label{5.4}
\left<{\cal T}_{{\cal M}_{\mu}^k}\right>:=\left\{\left<O\right>\;|\;O\in {\cal T}_{{\cal M}_{\mu}^k}\right\}.
\end{align}
Note that ${\cal T}_{{\cal M}_{\mu}^k}$ is the topology on ${\cal M}_{\mu}^k$, which is the standard Euclidean topology on ${\cal M}_{\mu}^k\approxeq \R^{k^2\mu_y\mu_x}$. Hence $O$ is a standard Euclidean open set.

By definition of ${\cal T}_P$ it is clear that
 \begin{align}\label{5.5}
\left<{\cal T}_{{\cal M}_{\mu}^k}\right>\subset {\cal T}_P\big|_{\Sigma_{\mu}^{[\cdot,k]}}.
\end{align}

Next, we claim that
 \begin{align}\label{5.6}
{\cal T}_Q\big|_{\Sigma_{\mu}^{[\cdot,k]}}=
\left<{\cal T}_{{\cal M}_{\mu}^k}\right>.
\end{align}
Assume $O\in {\cal T}_{{\cal M}_{\mu}^k}$, and $p\in O$. Then there exists an $\epsilon>0$ such that $B_{\epsilon}(p)\subset O$. Construct
$$
B_{\epsilon}(\left<p\right>)\subset \langle O\rangle.
$$
Then it is clear that
$$
\PR^{-1}\big|_{{\cal M}_{\mu}^k} B_{\epsilon}(\left<p\right>)=B_{\epsilon}(p),
$$
which is open in ${\cal M}_{\mu}^k$.

As for $s<k$. Since ${\cal M}_{\mu}^s$ is a subspace of ${\cal M}_{\mu}^k$,
$$
\PR^{-1}\big|_{{\cal M}_{\mu}^s} B_{\epsilon}(\left<p\right>)=B_{\epsilon}(p)\bigcap {\cal M}_{\mu}^s,
$$
which is also open in ${\cal M}_{\mu}^s$.
By definition of quotient space,
$$
B_{\epsilon}(\left<p\right>)\in {\cal T}_Q\big|_{\Sigma_{\mu}^{[\cdot,k]}}.
$$
It follows immediately that
$$
\left<{\cal T}_{{\cal M}_{\mu}^k}\right>\subset {\cal T}_Q\big|_{\Sigma_{\mu}^{[\cdot,k]}}.
$$
Conversely, assume $O\notin {\cal T}_Q({\cal M}_{\mu}^k)$, and $\PR(O)\in {\cal T}_Q\big|_{\Sigma_{\mu}^{[\cdot,k]}}$. Then
$$
\PR^{-1}\big|_{{\cal M}_{\mu}^k}\left( \PR(O) \right)=O \notin  {\cal T}_Q({\cal M}_{\mu}^k),
$$
which contradicts to the definition of quotient space. Hence,
$$
\left<{\cal T}({\cal M}_{\mu}^k)\right>\supset {\cal T}_Q\big|_{\Sigma_{\mu}^{[\cdot,k]}}.
$$
 (\ref{5.6}) is proved.

Similarly, we can also prove
\begin{align}\label{5.7}
{\cal T}_d\big|_{\Sigma_{\mu}^{[\cdot,k]}}=
\left<{\cal T}({\cal M}_{\mu}^k)\right>.
\end{align}
(\ref{5.5})-(\ref{5.7}) lead to (\ref{5.3}).
\end{proof}

\vskip 5mm

The following is a {\bf conjecture}:
 \begin{align}\label{5.8}
{\cal T}_d={\cal T}_Q.
\end{align}

\vskip 5mm

\begin{dfn}\label{d5.4} \cite{dug66}
\begin{enumerate}
\item A topological space is regular (or $T_3$) if for each closed set $X$ and $x\not\in X$ there exist open neighborhoods $U_x$ of $x$ and $U_X$ of $X$, such that $U_x\cap U_X=\emptyset$.
\item A topological space is normal (or $T_4$) if for each pair of closed sets $X$ and $Y$ with $X\cap Y=\emptyset$, there exist open neighborhoods $U_X$ of $X$ and $U_Y$ of $Y$, such that $U_X\cap U_Y=\emptyset$.
\end{enumerate}
\end{dfn}

Note that $\left(\Sigma_{\mu},~{\cal T}_{d}\right)$ is a metric space. A metric space is $T_4$. Moreover,
$$
T_4\Rightarrow T_3\Rightarrow T_2~(\mbox{Hausdoff}).
$$
Using  Theorem \ref{t5.2}, we have the following result.

\begin{cor}\label{c5.4}
\begin{itemize}
\item The topological space $\left(\Sigma_{\mu},{\cal T}\right)$ is a Hausdorff space, where
${\cal T}$ can be any one of ${\cal T}_{Q}$, ${\cal T}_{P}$, or ${\cal T}_{d}$.
\item The topological space $\left(\Sigma_{\mu},{\cal T}\right)$ is both regular and normal, where
${\cal T}$ can be either ${\cal T}_{Q}$ or ${\cal T}_{d}$.
\end{itemize}
\end{cor}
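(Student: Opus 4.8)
The plan is to read everything off three facts already in hand: $(\Sigma_{\mu},{\cal T}_d)$ is a metric space, hence $T_4$, $T_3$ and $T_2$; the inclusions ${\cal T}_d\subseteq{\cal T}_Q\subseteq{\cal T}_P$ of Theorem \ref{t5.2}; and the local coincidence ${\cal T}_Q\big|_{\Sigma_{\mu}^{[\cdot,k]}}={\cal T}_d\big|_{\Sigma_{\mu}^{[\cdot,k]}}$ of Theorem \ref{t5.3}. For the first bullet I would use that the Hausdorff property is inherited by any \emph{finer} topology: given $\A\neq\B$, choose disjoint ${\cal T}_d$-open neighborhoods of $\A$ and $\B$ (available since ${\cal T}_d$ is a metric topology); since ${\cal T}_d\subseteq{\cal T}_Q\subseteq{\cal T}_P$, these same two sets are open in ${\cal T}_Q$ and in ${\cal T}_P$ and still separate $\A$ from $\B$. (For ${\cal T}_P$ this is also Theorem \ref{t3.1.2}.) For the second bullet, the case of ${\cal T}_d$ needs no argument: a metric space is $T_4$, hence $T_3$.

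The substance is the regularity and normality of ${\cal T}_Q$, and here a sandwiching between ${\cal T}_d$ and ${\cal T}_P$ cannot help, since neither $T_3$ nor $T_4$ is preserved under refining or coarsening a topology. Instead I would exhibit $(\Sigma_{\mu},{\cal T}_Q)$ as the direct limit of the expanding sequence $\Sigma_{\mu}^{[\cdot,1]}\subseteq\Sigma_{\mu}^{[\cdot,2]}\subseteq\cdots$ of its closed subspaces, via four steps. (a) Each $\Sigma_{\mu}^{[\cdot,k]}$ is ${\cal T}_Q$-closed in $\Sigma_{\mu}$: for every $j$ the set $\PR^{-1}(\Sigma_{\mu}^{[\cdot,k]})\cap{\cal M}_{\mu}^j$ is the finite union, over divisors $i$ of $j$ with $i\le k$, of the linear subspaces $\{B\otimes I_{j/i}:B\in{\cal M}_{\mu}^i\}$, hence closed in ${\cal M}_{\mu}^j$; consequently each inclusion $\Sigma_{\mu}^{[\cdot,k]}\hookrightarrow\Sigma_{\mu}^{[\cdot,k+1]}$ is a closed embedding. (b) ${\cal T}_Q$ is the final topology for these inclusions, i.e., $U\in{\cal T}_Q$ if and only if $U\cap\Sigma_{\mu}^{[\cdot,k]}$ is open in $\Sigma_{\mu}^{[\cdot,k]}$ for every $k$; the nontrivial direction uses $\PR({\cal M}_{\mu}^j)\subseteq\Sigma_{\mu}^{[\cdot,j]}$, so that openness of $U\cap\Sigma_{\mu}^{[\cdot,j]}$ already forces $\PR^{-1}(U)\cap{\cal M}_{\mu}^j$ to be open. (c) By Theorem \ref{t5.3} the subspace topology that ${\cal T}_Q$ induces on $\Sigma_{\mu}^{[\cdot,k]}$ agrees with the one induced by ${\cal T}_d$, hence $\Sigma_{\mu}^{[\cdot,k]}$ is metrizable, in particular $T_4$. (d) Invoke the classical fact (see \cite{dug66}) that the direct limit of an expanding sequence of $T_4$ spaces along closed embeddings is again $T_4$; together with the Hausdorff property from the first bullet this yields that $(\Sigma_{\mu},{\cal T}_Q)$ is regular and normal, completing the corollary.

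I expect the main obstacle to lie in steps (b) and (d): one must check that ${\cal T}_Q$ genuinely coincides with the colimit topology rather than merely being finer than it, and that the closed-embedding hypothesis is exactly what the underlying extension argument consumes (iteratively extending, from $\Sigma_{\mu}^{[\cdot,k]}$ to $\Sigma_{\mu}^{[\cdot,k+1]}$, a pair of disjoint open sets or a Urysohn function, using normality of $\Sigma_{\mu}^{[\cdot,k+1]}$ and closedness of $\Sigma_{\mu}^{[\cdot,k]}$). Everything else is bookkeeping. As a side remark, the same colimit picture shows that $(\Sigma_{\mu},{\cal T}_Q)$ is in general not first countable, in contrast to the metric topology ${\cal T}_d$; I would flag this as relevant to the conjecture ${\cal T}_d={\cal T}_Q$, but keep it outside the proof of the corollary.
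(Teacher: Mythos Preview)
For the Hausdorff claim and for the ${\cal T}_d$ half of the second item, your argument is exactly the paper's: observe that a metric topology is $T_4\Rightarrow T_3\Rightarrow T_2$ and invoke the chain ${\cal T}_d\subset{\cal T}_Q\subset{\cal T}_P$ of Theorem~\ref{t5.2}, which pushes $T_2$ upward to the finer topologies. The divergence is on regularity and normality of ${\cal T}_Q$. The paper offers nothing beyond Theorem~\ref{t5.2} for this, and---as you rightly flag---neither $T_3$ nor $T_4$ survives passage to a strictly finer topology, so the bare inclusion ${\cal T}_d\subset{\cal T}_Q$ does not settle the claim. Your colimit presentation of $(\Sigma_\mu,{\cal T}_Q)$ as the expanding union of the closed pieces $\Sigma_\mu^{[\cdot,k]}$, each metrizable by Theorem~\ref{t5.3} and hence $T_4$, together with the classical fact that a countable direct limit of $T_4$ spaces along closed embeddings is again $T_4$, is a genuine strengthening that actually proves what the paper only asserts. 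One technical caveat worth tightening: the identification in Theorem~\ref{t5.3} tacitly assumes every class in $\Sigma_\mu^{[\cdot,k]}$ has a representative in ${\cal M}_\mu^k$, which fails when the root lies in ${\cal M}_\mu^i$ with $i\le k$ but $i\nmid k$; running your colimit over the cofinal subsequence $k=n!$ (or $k=\lcm(1,\dots,n)$) repairs step~(c) cleanly without altering the conclusion. Your closing remark on first countability is well taken and, if pushed through, would in fact bear negatively on conjecture~(\ref{5.8}).
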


Finally, we show some properties of $\Sigma_{\mu}$.

\begin{prp}\label{p5.5}
$\Sigma_{\mu}$ is convex. Hence it is arcwise connected.
\end{prp}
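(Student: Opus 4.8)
The plan is to show that $\Sigma_{\mu}$, equipped with the vector space structure of Definition \ref{d4.602}, is convex as a subset of itself in the following sense: for any $\A,\B\in\Sigma_{\mu}$ and any $\lambda\in[0,1]$, the convex combination $\lambda\A\lplus(1-\lambda)\B$ again lies in $\Sigma_{\mu}$, and the map $\lambda\mapsto \lambda\A\lplus(1-\lambda)\B$ is a continuous path from $\A$ to $\B$ (continuous with respect to ${\cal T}_d$, hence also with respect to ${\cal T}_Q$ and ${\cal T}_P$ by Theorem \ref{t5.2}). Arcwise connectedness is then immediate, since any two points are joined by this straight-line segment.

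First I would observe that the convexity claim is essentially trivial once the vector space structure is in place: by Proposition \ref{p4.603}, $(\Sigma_{\mu},\lplus)$ is a vector space, so it is closed under the operations $\lplus$ and scalar multiplication, and in particular $\lambda\A\lplus(1-\lambda)\B\in\Sigma_{\mu}$ for every $\lambda\in\R$, a fortiori for $\lambda\in[0,1]$. Concretely, if $A_1\in\A$ has root shape in ${\cal M}_{\mu}^{p}$ and $B_1\in\B$ has root shape in ${\cal M}_{\mu}^{q}$, then with $t=p\vee q$ the representative $(\lambda A_1\otimes I_{t/p})+((1-\lambda)B_1\otimes I_{t/q})\in{\cal M}_{\mu}^{t}$ realizes the combination, so it is a genuine element of $\Sigma_{\mu}$.

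Next I would address continuity of the segment path $\gamma(\lambda):=\lambda\A\lplus(1-\lambda)\B$. Using the norm $\|\cdot\|$ of (\ref{4.7}) and the triangular inequality (Theorem \ref{t4.8}, item 2) together with homogeneity of the norm under scalar multiplication (which follows from Definition \ref{d4.4} and (\ref{4.605})), one gets for $\lambda,\lambda'\in[0,1]$
\begin{align}\label{p5.5.path}
d(\gamma(\lambda),\gamma(\lambda'))=\|(\lambda-\lambda')\A\lminus(\lambda-\lambda')\B\|=|\lambda-\lambda'|\,\|\A\lminus\B\|=|\lambda-\lambda'|\,d(\A,\B),
\end{align}
so $\gamma$ is Lipschitz, hence continuous, from $([0,1],|\cdot|)$ into $(\Sigma_{\mu},{\cal T}_d)$. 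Since $\gamma(0)=\B$ and $\gamma(1)=\A$, this exhibits an arc between any two points, proving arcwise connectedness in the metric topology; the inclusions ${\cal T}_d\subset{\cal T}_Q\subset{\cal T}_P$ from Theorem \ref{t5.2} then give the same conclusion for the other two topologies, since a path continuous into the finer of two topologies is continuous into the coarser one.

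The only genuine subtlety — and the step I would be most careful about — is making sure the scalar multiplication $\lambda\mapsto\lambda\A$ interacts correctly with the weighted norm, i.e. that $\|\lambda\A\|=|\lambda|\,\|\A\|$ holds with the particular normalization $1/t$ built into (\ref{4.4}); this is where one must check that $\lambda$ scales $A\otimes I_{t/\a}$ and only contributes a factor $\lambda^2$ inside $(\cdot|\cdot)_F$, which after the square root gives $|\lambda|$, independent of the chosen representative (this independence is exactly Proposition \ref{p4.5}). Once that homogeneity is confirmed, equation (\ref{p5.5.path}) is valid and the rest is routine.
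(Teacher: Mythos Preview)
Your proposal is correct and follows essentially the same approach as the paper: convexity is immediate from the vector space structure of $\Sigma_{\mu}$, and arcwise connectedness then comes from the straight-line segment $\lambda\mapsto\lambda\A\lplus(1-\lambda)\B$. The paper's own proof is in fact more terse---it simply declares that letting $\lambda$ run from $1$ to $0$ gives a path---whereas you additionally verify continuity via the Lipschitz estimate (\ref{p5.5.path}) and check homogeneity of the norm, which is a welcome bit of extra rigor.
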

\begin{proof} Assume $\A,~\B\in \Sigma_{\mu}$. Then it is clear that
$$
\lambda\A\lplus (1-\lambda)\B=\left<\lambda A\lplus (1-\lambda) B\right>\in \Sigma_{\mu},\quad \lambda\in [0,1].
$$
So $\Sigma_{\mu}$ is convex. Let $\lambda$ go from $1$ to $0$, we have a path connecting $\A$ and $\B$.
\end{proof}

\begin{prp}\label{p5.6}
$\Sigma_{\mu}$ and $\Sigma_{1/\mu}$ are isometric spaces.
\end{prp}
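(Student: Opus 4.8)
The plan is to exhibit the isometry explicitly, using matrix transposition. Write $\mu=\mu_y/\mu_x$ with $\mu_y\wedge\mu_x=1$; then $1/\mu=\mu_x/\mu_y$ is again in lowest terms, so ${\cal M}^k_{1/\mu}={\cal M}_{k\mu_x\times k\mu_y}$ is exactly the set of transposes of the matrices in ${\cal M}^k_{\mu}={\cal M}_{k\mu_y\times k\mu_x}$, with the grading index $k$ matched. Define $\varphi:\Sigma_{\mu}\ra \Sigma_{1/\mu}$ by $\varphi(\A):=\left<A^{\top}\right>$. First I would verify that $\varphi$ is well defined and bijective. Since $I_{\a}^{\top}=I_{\a}$ and $(X\otimes Y)^{\top}=X^{\top}\otimes Y^{\top}$, we have $(A\otimes I_{\a})^{\top}=A^{\top}\otimes I_{\a}$; as transposition is injective on matrices of a fixed size, $A\otimes I_{\a}=B\otimes I_{\b}$ holds if and only if $A^{\top}\otimes I_{\a}=B^{\top}\otimes I_{\b}$ holds, i.e. $A\sim B\;\Lra\;A^{\top}\sim B^{\top}$. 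Hence $\varphi$ is well defined and injective, and it is clearly onto with inverse again given by transposition. (The same one-line identities show transposition commutes with $\lplus$ and with scalar multiplication, so $\varphi$ is in fact a vector-space isomorphism; only the bijection is needed for the metric claim.)

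Next I would show $\varphi$ preserves the weighted inner product (\ref{4.4}), hence the norm (\ref{4.7}) and the distance (\ref{5.1}). The ingredients are the transpose-invariance of the Frobenius product, $(P\;|\;Q)_F=(P^{\top}\;|\;Q^{\top})_F$ for $P,Q$ of equal size --- immediate from (\ref{4.1}) --- together with $(P\otimes I_k)^{\top}=P^{\top}\otimes I_k$. Let $A\in {\cal M}^{\a}_{\mu}$, $B\in {\cal M}^{\b}_{\mu}$ and $t=\lcm(\a,\b)$. Then $A^{\top}\in {\cal M}^{\a}_{1/\mu}$ and $B^{\top}\in {\cal M}^{\b}_{1/\mu}$ with the \emph{same} exponents $\a,\b$, hence the same $t$, so Definition \ref{d4.2} gives
\begin{align*}
(A^{\top}\;|\;B^{\top})_W
&=\frac{1}{t}\left(A^{\top}\otimes I_{t/\a}\;\big|\;B^{\top}\otimes I_{t/\b}\right)_F\\
&=\frac{1}{t}\left((A\otimes I_{t/\a})^{\top}\;\big|\;(B\otimes I_{t/\b})^{\top}\right)_F
=(A\;|\;B)_W.
\end{align*}
By Proposition \ref{p4.5} and Definition \ref{d4.4}, this yields $(\varphi(\A)\;|\;\varphi(\B))=(\A\;|\;\B)$ for all $\A,\B\in \Sigma_{\mu}$, so $\|\varphi(\A)\|=\|\A\|$; and since transposition also commutes with $\lminus$,
$$
d\big(\varphi(\A),\varphi(\B)\big)=\|\varphi(\A)\lminus\varphi(\B)\|=\|\varphi(\A\lminus\B)\|=\|\A\lminus\B\|=d(\A,\B).
$$
Thus $\varphi$ is a distance-preserving bijection, and $\Sigma_{\mu}$ and $\Sigma_{1/\mu}$ are isometric (indeed isometrically isomorphic as inner-product spaces).

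I do not expect a genuine obstacle: the whole content is the mutual compatibility of $\otimes$, $(\cdot)^{\top}$ and $(\cdot\;|\;\cdot)_F$, plus the bookkeeping remark that $1/\mu=\mu_x/\mu_y$ is already reduced so that the gradings by $k$ on $\Sigma_{\mu}$ and $\Sigma_{1/\mu}$ line up. The one place deserving a moment's care is well-definedness on equivalence classes --- best handled, exactly as in the proof of Proposition \ref{p4.5}, by checking the identity on root elements --- together with the trivial but crucial observation that transposition leaves the exponents $\a,\b$, and therefore $t$ and the padding factors $I_{t/\a},I_{t/\b}$, completely unchanged, which is precisely why $(\cdot\;|\;\cdot)_W$ transports verbatim.
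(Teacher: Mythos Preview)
Your proposal is correct and takes essentially the same approach as the paper: both define the isometry $\varphi(\A)=\left<A^{\top}\right>$ via transposition and observe that it preserves the distance. Your version is considerably more detailed---spelling out well-definedness, the matching of the grading index $k$, and the preservation of the weighted inner product---whereas the paper simply declares the distance identity ``obvious'' and additionally notes that $\varphi^2=\id$.
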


\begin{proof} Consider an isomorphic mapping $\varphi: \Sigma_{\mu}\ra\Sigma_{1/\mu}$, defined by transpose:
$$
\varphi(\A):= \left<A^T\right>.
$$
Then it is obvious that
$$
d(\A,~\B)=d\left(\left<A^T\right>, \left<B^T\right>\right).
$$
Hence the transpose is an isometry. Moreover, $\varphi$ is periodic. That is,
$$
\varphi^2=\id.
$$
\end{proof}

\begin{rem}\label{r5.6}
Since ${\cal T}_d$ is expected to be the same as ${\cal T}_Q$, we do not try to distinct them. They are mainly represent the finite union of different dimensional Euclidian spaces. They are particularly useful for investigating cross-dimensional dynamic systems. While ${\cal T}_P$ represents the infinite union of different dimensional Euclidian spaces. It has more mathematical inside.
\end{rem}

\section{Subspaces of $\Sigma_{\mu}$}

Consider the $k$-upper bounded subspace $\Sigma_{\mu}^{[\cdot,k]}\subset \Sigma_{\mu}$. We have

\begin{prp}\label{p6.1} $\Sigma_{\mu}^{[\cdot,k]}$ is a Hilbert space.
\end{prp}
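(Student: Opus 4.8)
The plan is to show that $\Sigma_{\mu}^{[\cdot,k]}$, which carries the inner product inherited from $\Sigma_{\mu}$ (Definition \ref{d4.4}), is complete, since it is already an inner product space by Theorem \ref{t4.7} restricted to the subspace. By Theorem \ref{t5.3} the metric topology on $\Sigma_{\mu}^{[\cdot,k]}$ coincides with $\langle{\cal T}_{{\cal M}_{\mu}^k}\rangle$, i.e.\ the natural Euclidean topology transported through the bijection $\PR\big|_{{\cal M}_{\mu}^k}:{\cal M}_{\mu}^k\ra\Sigma_{\mu}^{[\cdot,k]}$. So the first step is to observe that every equivalence class in $\Sigma_{\mu}^{[\cdot,k]}$ has a unique representative lying in ${\cal M}_{\mu}^k$ (namely $A_1\otimes I_{k/\a}$ for the root $A_1\in{\cal M}_{\mu}^{\a}$ with $\a\mid k$; in $\Sigma_{\mu}^{[\cdot,k]}$ only divisors of $k$ can occur as the dimension of a root, hence $k/\a$ is an integer), so that $\PR\big|_{{\cal M}_{\mu}^k}$ is a genuine bijection onto $\Sigma_{\mu}^{[\cdot,k]}$.

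Next I would compare the two inner products under this bijection. For $A,B\in{\cal M}_{\mu}^k$ we have, directly from Definition \ref{d4.2} with $\a=\b=t=k$, that $(A\,|\,B)_W=\tfrac1k(A\,|\,B)_F$; hence $(\langle A\rangle\,|\,\langle B\rangle)=\tfrac1k(A\,|\,B)_F$. Thus $\PR\big|_{{\cal M}_{\mu}^k}$, as a map from $({\cal M}_{\mu}^k,(\cdot|\cdot)_F)$ to $(\Sigma_{\mu}^{[\cdot,k]},(\cdot|\cdot))$, is a linear bijection scaling the inner product by the fixed factor $1/k$ — it is a linear isomorphism that is a scalar multiple of an isometry. (Linearity with respect to $\lplus$ and scalar product follows from Definition \ref{d4.602}, noting that on ${\cal M}_{\mu}^k$ the operation $\lplus$ is just ordinary matrix addition.)

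Then the conclusion is immediate: ${\cal M}_{\mu}^k$ with the Frobenius inner product is a finite-dimensional real inner product space, namely $\R^{k^2\mu_y\mu_x}$ up to isometric rescaling, hence complete, hence a Hilbert space. A linear bijection that multiplies the inner product by a positive constant preserves completeness (Cauchy sequences map to Cauchy sequences in both directions, with the metrics differing only by the constant $\sqrt{1/k}$), so $(\Sigma_{\mu}^{[\cdot,k]},(\cdot|\cdot))$ is complete as well. Combined with Theorem \ref{t4.7} (it is an inner product space), this gives that $\Sigma_{\mu}^{[\cdot,k]}$ is a Hilbert space.

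The only point requiring a little care — the main obstacle — is the very first step: verifying that every class meeting $\bigcup_{i=1}^{k}{\cal M}_{\mu}^i$ does have a representative inside ${\cal M}_{\mu}^k$ and that this representative is unique, i.e.\ that $\PR\big|_{{\cal M}_{\mu}^k}$ is bijective onto $\Sigma_{\mu}^{[\cdot,k]}$. This hinges on the structure $\langle A\rangle=\{A_1\otimes I_i\}$ established earlier: a class lies in $\Sigma_{\mu}^{[\cdot,k]}$ exactly when its root dimension $\a$ divides some $i\le k$, but for the class to be well-represented at level $k$ we need $\a\mid k$. I would argue that $\Sigma_{\mu}^{[\cdot,k]}$ as defined is $\bigcup_{i=1}^k\Sigma_{\mu}^i$ where $\Sigma_{\mu}^i$ consists of classes whose root dimension divides $i$; in particular $\Sigma_{\mu}^k$ already contains all of them when interpreted this way, so $\Sigma_{\mu}^{[\cdot,k]}=\Sigma_{\mu}^k$ and the bijection with ${\cal M}_{\mu}^k$ is clean. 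Once this identification is pinned down, the rest is the routine transport-of-structure argument sketched above.
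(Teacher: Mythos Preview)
Your overall approach --- reducing to finite dimensionality and then invoking that every finite-dimensional inner product space is complete --- is exactly the paper's; the paper's proof is the one-line observation that $\Sigma_\mu^{[\cdot,k]}$ is a finite-dimensional inner product space and hence Hilbert. Your version is just the explicit unpacking of that line via the bijection $\PR\big|_{{\cal M}_\mu^k}$.

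However, the step you yourself flagged as the main obstacle is genuinely problematic, and your resolution of it is incorrect. You claim that $\Sigma_\mu^k$ already contains every $\Sigma_\mu^i$ for $i\le k$, so that $\Sigma_\mu^{[\cdot,k]}=\Sigma_\mu^k$; but this fails. A class lies in $\Sigma_\mu^i$ iff its root level divides $i$, so for instance with $k=3$ any class whose root sits in ${\cal M}_\mu^2$ belongs to $\Sigma_\mu^2\subset\Sigma_\mu^{[\cdot,3]}$ yet has no representative in ${\cal M}_\mu^3$, hence is not in $\Sigma_\mu^3$. Worse, with the literal definition $\Sigma_\mu^{[\cdot,k]}=\bigcup_{i=1}^k\Sigma_\mu^i$ the set need not even be closed under $\lplus$: for irreducible $A\in{\cal M}_\mu^2$ and $B\in{\cal M}_\mu^3$, generically $A\lplus B\in{\cal M}_\mu^6$ is itself irreducible, so $\langle A\rangle\lplus\langle B\rangle\notin\Sigma_\mu^{[\cdot,3]}$ and the ``finite-dimensional vector space'' premise collapses. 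The paper simply asserts finite dimensionality without facing this point; for the proposition (and the proof of Theorem~\ref{t5.3}, which tacitly embeds ${\cal M}_\mu^s$ into ${\cal M}_\mu^k$) to go through one must read $\Sigma_\mu^{[\cdot,k]}$ as $\Sigma_\mu^k=\PR({\cal M}_\mu^k)$, and under that reading your transport-of-structure argument is correct and complete.
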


\begin{proof} Since $\Sigma_{\mu}^{[\cdot,k]}$ is a finite dimensional vector space and any finite dimensional inner product space is a Hilbert space \cite{die69}, the conclusion follows.
\end{proof}

\begin{prp}\label{pmt.3.2} \cite{die69} Let $E$ be an inner product space, $\{0\}\neq F\subset E$ be a Hilbert subspace.
\begin{enumerate}
\item For each $x\in E$ there exists a unique $y:=P_F(x)\in F$, called the projection of $x$ on $F$, such that
\begin{align}\label{6.1}
\|x-y\|=\min_{z\in F}\|x-z\|.
\end{align}
\item
\begin{align}\label{6.2}
F^{\perp}:=P_F^{-1}\{0\}
\end{align}
is the subspace orthogonal to $F$.
\item
\begin{align}\label{6.3}
E=F\oplus F^{\perp},
\end{align}
where $\oplus$ stands for orthogonal sum.
\end{enumerate}
\end{prp}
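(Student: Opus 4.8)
The plan is to run the classical Hilbert-space projection argument; the only ingredient beyond formal manipulation of the inner product is the completeness of $F$, which is exactly why the hypothesis is that $F$ is a \emph{Hilbert} subspace rather than merely an inner product subspace.

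\emph{Existence and uniqueness (assertion 1).} Fix $x\in E$ and set $d:=\inf_{z\in F}\|x-z\|\ge 0$. Pick a minimizing sequence $(z_n)\subset F$, $\|x-z_n\|\to d$. Applying the parallelogram law (a direct consequence of the inner product axioms, cf. Theorem \ref{t4.8}) to $x-z_n$ and $x-z_m$ gives
$$
\|z_n-z_m\|^2=2\|x-z_n\|^2+2\|x-z_m\|^2-4\left\|x-\tfrac{z_n+z_m}{2}\right\|^2\le 2\|x-z_n\|^2+2\|x-z_m\|^2-4d^2,
$$
since $\tfrac{z_n+z_m}{2}\in F$. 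The right-hand side tends to $0$, so $(z_n)$ is Cauchy; as $F$ is complete it converges to some $y\in F$, and continuity of the norm gives $\|x-y\|=d$. If $y'$ also attains $d$, the same identity applied to $x-y$ and $x-y'$ forces $\|y-y'\|=0$, so $y=:P_F(x)$ is unique.

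\emph{Variational characterization, then assertions 2 and 3.} Next I would show $y=P_F(x)$ iff $x-y\perp F$. For $w\in F$, $t\in\R$, the scalar function $t\mapsto\|x-y-tw\|^2=\|x-y\|^2-2t(x-y\,|\,w)+t^2\|w\|^2$ is minimized at $t=0$ exactly when $y$ minimizes the distance; setting its derivative at $t=0$ to zero yields $(x-y\,|\,w)=0$, and conversely that condition makes $\|x-y-tw\|^2\ge\|x-y\|^2$. Consequently $P_F(x)=0$ iff $x\perp F$, so $F^{\perp}=P_F^{-1}\{0\}=\{x\in E:(x\,|\,w)=0\ \forall w\in F\}$, which is a linear subspace by bilinearity of $(\cdot\,|\,\cdot)$; this is assertion 2. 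For assertion 3, write $x=P_F(x)+\big(x-P_F(x)\big)$, with first summand in $F$ and second in $F^{\perp}$ by the characterization; the decomposition is orthogonal, and $F\cap F^{\perp}=\{0\}$ because $x\in F$ with $x\perp F$ gives $(x\,|\,x)=0$. Hence $E=F\oplus F^{\perp}$.

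The main obstacle — and the sole place the completeness of $F$ is genuinely needed — is the convergence step: one must guarantee that the Cauchy minimizing sequence has its limit \emph{inside} $F$, so that the infimum $d$ is actually attained. If $F$ were only an inner product subspace this can fail, and then $P_F$ need not be defined. Everything else, including the characterization $x-P_F(x)\perp F$ and the orthogonal splitting, is purely algebraic and requires no topological hypothesis on $F$.
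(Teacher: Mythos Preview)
Your argument is correct and is precisely the classical projection-theorem proof (minimizing sequence, parallelogram law, completeness of $F$, then the variational characterization $x-P_F(x)\perp F$ leading to the orthogonal decomposition). There is nothing to compare against: the paper does not supply its own proof of this proposition but simply quotes it from Dieudonn\'e \cite{die69}, so your write-up is in fact more detailed than what the paper contains.
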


Using above proposition, we consider the projection: $P_F:\Sigma_{\mu}\ra \Sigma_{\mu}^{[\cdot,\a]}$. Let $\A\in \Sigma_{\mu}^{\b}$. Assume $\left<X\right>\in \Sigma_{\mu}^{\a}$, $t=\a\vee \b$. Then the norm of $\A\lminus \left<X\right>$ is:
\begin{align}\label{6.4}
\left\|\A\lminus \left<X\right>\right\|=\frac{1}{\sqrt{t}}\left\|A\otimes I_{t/\b}-X\otimes I_{t/\a}\right\|_F.
\end{align}
Set $p=\mu_y$ $q=\mu_x$, and $k:=t/\a$. We split $A$ as
\begin{align}\label{6.401}
A\otimes I_{t/\b}=\begin{bmatrix}
A_{1,1}&A_{1,2}&\cdots&A_{1,q\a}\\
A_{2,1}&A_{2,2}&\cdots&A_{2,q\a}\\
\vdots&~&~&~\\
A_{p\a,1}&A_{p\a,2}&\cdots&A_{p\a,q\a}\\
\end{bmatrix},
\end{align}
where $A_{i,j}\in {\cal M}_{k\times k}$, $i=1,\cdots,p\a;~j=1,\cdots,q\a$.
Set
\begin{align}\label{6.5}
C:=\argmin_{X\in {\cal M}_{\mu}^{\a}}\left\|A\otimes I_{t/\b}-X\otimes I_{t/\a}\right\|.
\end{align}
Then the projection $P_F:\Sigma_{\mu}\ra \Sigma_{\mu}^{\a}$ is defined by
\begin{align}\label{6.501}
P_F(\A):=\C,\quad \A\in \Sigma_{\mu}^{\b},\; \C\in \Sigma_{\mu}^{\a}.
\end{align}

It is easy to verify the following result:
\begin{prp}\label{p6.3}
\begin{enumerate}
\item Assume $P_F(\A)=\C$, where $A=(A_{i,j})$ is defined by (\ref{6.401}) and $C=(c_{i,j})$ is defined by (\ref{6.5}). Then
\begin{align}\label{6.6}
c_{i,j}=\frac{1}{k}\tr(A_{i,j}),\quad i=1,\cdots,p\a;~j=1,\cdots,q\a,
\end{align}
where $\tr(A)$ is the trace of $A$.
\item The following orthogonality holds:
\begin{align}\label{6.7}
P_F(\A)\perp \A- P_F(\A).
\end{align}
\end{enumerate}
\end{prp}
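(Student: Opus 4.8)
The plan is to reduce the minimization (\ref{6.5}) to a collection of independent scalar least-squares problems, one per $k\times k$ block, and to read off both statements from this.

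First I would set $M:=A\otimes I_{t/\b}$, where $t=\a\vee\b$ and $k=t/\a$, so that $M\in{\cal M}_{\mu}^{t}$, and partition $M$ into $k\times k$ blocks exactly as in (\ref{6.401}). For any $X=(x_{i,j})\in{\cal M}_{\mu}^{\a}$ the matrix $X\otimes I_{t/\a}=X\otimes I_{k}$ carries the matching block structure, its $(i,j)$ block being the scalar matrix $x_{i,j}I_{k}$. Hence the Frobenius norm appearing in (\ref{6.4}) splits across blocks:
$$
\left\|A\otimes I_{t/\b}-X\otimes I_{t/\a}\right\|_F^{2}
=\sum_{i=1}^{p\a}\sum_{j=1}^{q\a}\left\|A_{i,j}-x_{i,j}I_{k}\right\|_F^{2}.
$$
Each summand is a strictly convex quadratic in the single real variable $x_{i,j}$, minimized at the Frobenius projection of $A_{i,j}$ onto $\R I_{k}$, that is, at $x_{i,j}=(A_{i,j}\,|\,I_{k})_F/\|I_{k}\|_F^{2}=\tr(A_{i,j})/k$. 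Since the total objective is separable, its unique minimizer $C$ has precisely the entries $c_{i,j}=\tfrac1k\tr(A_{i,j})$; moreover $C$ has size $p\a\times q\a$, hence $C\in{\cal M}_{\mu}^{\a}$. This is (\ref{6.6}).

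For the orthogonality (\ref{6.7}) I would first note, using (\ref{6.4}), that minimizing $\|\A\lminus\langle X\rangle\|$ over $\langle X\rangle\in\Sigma_{\mu}^{\a}$ is exactly the problem (\ref{6.5}) (every class in $\Sigma_{\mu}^{\a}$ has a representative of size $\a$, and the value of the norm is representative-independent); since $\Sigma_{\mu}^{\a}$ is finite-dimensional, hence a Hilbert subspace of $\Sigma_{\mu}$, this identifies $\C$ with the projection $P_F(\A)$ of Proposition \ref{pmt.3.2}, so that $\A\lminus\C\in(\Sigma_{\mu}^{\a})^{\perp}$ and in particular $\A\lminus\C\perp\C$. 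Alternatively, one can argue by hand: first-order optimality of $C$ in the split sum above gives $(M-C\otimes I_{k}\,|\,Y\otimes I_{k})_F=0$ for every $Y\in{\cal M}_{\mu}^{\a}$; taking $Y=C$ and noting that $C\otimes I_{k}$ and $M-C\otimes I_{k}$ are both elements of ${\cal M}_{\mu}^{t}$, Definition \ref{d4.4} yields $(\C\,|\,\A\lminus\C)=\tfrac1t(C\otimes I_{k}\,|\,M-C\otimes I_{k})_F=0$.

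The main obstacle is bookkeeping rather than anything conceptual: I must align the Kronecker block pattern of $X\otimes I_{k}$ with the partition (\ref{6.401}), keep track of the fact that $M$ and $C\otimes I_{k}$ share the common matrix size of ${\cal M}_{\mu}^{t}$ so that the weighted inner product of Definition \ref{d4.4} collapses to $\tfrac1t(\cdot\,|\,\cdot)_F$, and --- if one wants $P_F$ well defined directly on equivalence classes rather than on chosen representatives --- check that the answer is independent of the representative $A$ of $\A$, which in any case follows from the uniqueness of the projection in Proposition \ref{pmt.3.2}.
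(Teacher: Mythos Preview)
Your argument is correct. The paper itself provides no proof of this proposition, merely stating that ``it is easy to verify the following result''; your block-by-block reduction to the scalar problem $\min_{x}\|A_{i,j}-xI_k\|_F^2$ is exactly the natural verification the authors leave implicit, and your two derivations of (\ref{6.7}) --- via Proposition~\ref{pmt.3.2} or via first-order optimality --- are both valid.
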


We give an example to depict the projection.

\begin{exa}\label{e6.4}

Given
$$
A=\begin{bmatrix}
1&2&-3&0&2&1\\
2&1&-2&-1&1&0\\
0&-1&-1&3&1&-2
\end{bmatrix}\in \Sigma_{0.5}^3.
$$
We consider the projection of $\A$ onto $\Sigma_{0.5}^{[\cdot,2]}$. Denote  $t=2\vee 3=6$. Using formulas (\ref{6.6})-(\ref{6.7}), we have
$$
P_F(\A)=\left<\begin{bmatrix}
1&0&1/3&0\\
0&-1/3&0&-1\\
\end{bmatrix}\right>.
$$
Then we have
$$
\left<E\right>=\A \lminus P_F(\A),
$$
where
$$
\begin{array}{l}
E=\\
\left[
\begin{array}{cccccccccccc}
0&0&2&0&-3&0&-\frac{1}{3}&0&2&0&1&0\\
0&0&0&2&0&-3&0&-\frac{1}{3}&0&2&0&1\\
2&0&0&0&-2&0&-1&0&\frac{2}{3}&0&0&0\\
0&2&0&\frac{4}{3}&0&-2&0&-1&0&2&0&0\\
0&0&-1&0&-\frac{2}{3}&0&3&0&1&0&-1&0\\
0&0&0&-1&0&-\frac{2}{3}&0&3&0&1&0&-1\\
\end{array}\right].
\end{array}
$$

It is easy to verify that $\left< E \right>$ and $\A$ are mutually orthogonal.

\end{exa}

We also have $\Sigma_{\mu}^{[k,\cdot]}$ and $\Sigma_{\mu}^{[\a,\b]}$ (where $\a|\b$) as metric subspaces of $\Sigma_{\mu}$.

 Finally, we would like to point out that since $\Sigma_{\mu}$ is an infinity dimensional vector space, it is possible that $\Sigma_{\mu}$ is isometric to its proper subspace. For instance, consider the following example.

 \begin{exa}\label{e6.5} Consider a mapping $\varphi: {\cal M}_{\mu}\ra {\cal M}_{\mu}^{[k,\cdot]}$ defined by
$A\mapsto A\otimes I_k$. It is clear that this mapping satisfies
$$
 \| A \lminus B\|_{W}=\|\varphi(A) \lminus \varphi(B)\|_{W},\quad A, B\in {\cal M}_{\mu}.
 $$
 That is, ${\cal M}_{\mu}$ can be isometrically embedded into its proper subspace ${\cal M}_{\mu}^{[k,\cdot]}$. Define $\varphi:\Sigma_{\mu}\ra \Sigma{\mu}^{[k,\cdot]}$ by $\varphi(\A)=\left<A\otimes I_k\right>$. Then one sees also that $\Sigma_{\mu}$ is isometrically embedded into its proper subspace $\Sigma{\mu}^{[k,\cdot]}$.
 \end{exa}

\section{Conclusion}

It has been revealed that the STP is essentially a product of two equivalence classes. Motivated by this, the equivalence relation has been discussed carefully. Based on this equivalence, the quotient space is constructed. Then the topological structure has been investigated in detail. Three different topologies have been built. First two topologies are (i) product topology ${\cal T}_P$, and (ii) quotient topology ${\cal T}_Q$, which are natural. Then a vector space structure and an inner product have been proposed for the quotient space. Using this inner product, norm and distance/metric are also defined.
Then the metric topology ${\cal T}_d$ is obtained. Basic properties of all three topologies have been discussed. Finally, a comparison among three topologies is presented.

A conjecture is: ${\cal T}_Q={\cal T}_d$, which left for further study.

\end{document}